\definecolor{myblue}{RGB}{116,173,209}
\definecolor{myred}{RGB}{244,109,67}
\newcommand{\ejcomment}[1]{{\color{black}#1}}
\newenvironment{changed}{\color{black}}{\color{black}}
\newcommand{\vpcomment}[1]{{\color{black}#1}}
  \pgfplotsset{
    compat=newest,
    tick label style={font=\scriptsize},
    label style={font=\scriptsize},
    legend style={font=\scriptsize}
  }
  \pgfplotsset{select coords between index/.style 2 args={
      x filter/.code={
          \ifnum\coordindex<#1\fi
          \ifnum\coordindex>#2\fi
      }
  }}
     \renewcommand{\tikzsetnextfilename}[1]{}
\title{From eigenvector nonlinearities with quadratic structure to eigenvalue nonlinearities with algebraic structure  
}
\author{Elias Jarlebring \and Vilhelm P. Lithell}
\institute{Elias Jarlebring (corresponding author)\at
              Department of Mathematics, KTH Royal Institute of Technology, Stockholm, Sweden \\
              \email{eliasj@kth.se}           
           \and
           Vilhelm P. Lithell \at Department of Mathematics, KTH Royal Institute of Technology, Stockholm, Sweden 
}
\date{Received: date / Accepted: date }
\begin{document}

\maketitle
\begin{abstract}
Over the past decades, transformations between different classes of eigenvalue problems have played a central role in the development of numerical methods for eigenvalue computations. One of the most well-known and successful examples of this is the companion linearization for polynomial eigenvalue problems. In this paper, we construct a transformation that equivalently reframes a specific type of eigenvalue problem with eigenvector nonlinearities (NEPv) into 
an eigenvalue problem with eigenvalue nonlinearities (NEP).
The NEPv class considered consists of nonlinearities expressed as sums of products of matrices and scalar functions, where the scalar functions depend nonlinearly on the eigenvector.
Our transformation defines scalar eigenvalue 
nonlinearities through a polynomial system, resulting in NEP nonlinearities of algebraic type. 
We propose methods to
solve the polynomial system, one of which involves a multiparameter eigenvalue problem (MEP). 
We adapt well-established NEP solvers to this setting, with the most effective strategy being a combination of deflation and a locally quadratically convergent iterative method. 
The efficiency and properties of the approach are illustrated by solving a problem related to a modification of a Gross-Pitaevskii equation (GPE).
The simulations are reproducible and publicly available.
\keywords{Eigenvalue problem \and Eigenvector nonlinearity \and Eigenvalue nonlinearity \and Gross-Pitaevskii equation}
\subclass{35P30 \and 65H17 \and 65F60 \and 15A18 \and 65F15 }
\end{abstract}

\section{Introduction}\label{sec:intro}
\begin{changed}
    The general context of this paper concerns a nonlinear eigenvalue problem, where the nonlinearity appears as eigenvector dependence. 
More precisely, we want to find 
eigenpairs $(\lambda, v)\in\RR\times\RR^{n}\backslash\{0\}$, such that
\begin{subequations}\label{eq:full_NEPv_problem}
\begin{eqnarray}
    \lambda E v&=& A(v)v,\label{eq:NEPv}\\
    1&=& \norm{v}_B^2 := \trans{v}Bv, 
    \label{eq:NEPv_norm_cond}
\end{eqnarray}
\end{subequations}
where $A(v)\in\RR^{n\times n}$ is symmetric for all $v$ and $E,B\in\RR^{n\times n}$ are symmetric positive definite matrices. In most situations, the problem can be equivalently transformed to a problem of the same structure with $E=B=I$; however, we prefer the general structure for convenience in the application. 

This is a very general problem, and there are numerous applications; see the literature discussion below. 
One of the most important
applications stems from the modeling of Bose-Einstein condensates
by the Gross-Pitaevskii equations (GPE) \cite{Henning:2025:REVIEW}.  Inspired by the structure arising in the discretization of the GPE, we consider a problem where the nonlinearities are 
quadratic functions of the eigenvectors. More precisely, the NEPv \eqref{eq:NEPv}  is defined by
\begin{equation}
    A(v):=A_0+(\trans{a_1}v)^2 a_1\trans{a_1}+\cdots+(\trans{a_m}v)^2 a_m\trans{a_m},\label{eq:NEPv}
\end{equation}
where $A_0\in\RR^{n\times n}$ is a symmetric matrix, and $a_1,\dots,a_m\in\RR^{n}$. To avoid degeneracy, we assume that the pencil $\lambda E-A_0$ is regular, i.e., $\det(\lambda E-A_0)\not\equiv 0$, which implies that $\lambda E-A_0$ is invertible except for a finite number of $\lambda$-values. In the context of discretization of PDEs, $E$ is usually symmetric positive definite, which implies that the pencil is regular.

One way to see how this structure arises from the GPE is by noting that the nonlinearity appearing the GPE is quadratic in the eigenfunction,
due to the energy associated with particle interactions \cite[Section~2.2]{Henning:2025:REVIEW}.
The finite difference discretization of the nonlinearity involves the term $\diag(v)^2=(e_1^Tv)^2e_1e_1^T+\cdots+(e_n^Tv)^2e_ne_n^T$; cf 
\cite[Eq (5.4)]{Jarlebring:2014:INVIT}, i.e., the same structure that appears in \eqref{eq:NEPv}.
We consider the  slightly more general case $a_i\neq e_i$, since it directly arises from continuous formulation of a similar problem; see Section~\ref{sec:num_examples}, and it also enables us to naturally introduce scaling of the nonlinearity. Our theoretical results hold for any choice of $m$, i.e., also $m=n$ which is the most natural choice in the GPE application. However, in practice, the performance of the proposed numerical methods is much better when $m\ll n$, and infeasible for a fine finite-difference discretization with $m=n$. 
Despite this slight mismatch between the GPE application and the 
proposed method, we view the results of this paper as directly valuable for the problem \eqref{eq:NEPv}, and as a stepping stone for further method development for the GPE.

The main line of reasoning and scientific contributions is as follows.
\begin{itemize}
    \item[(a)] We propose a technique that equivalently transforms \eqref{eq:full_NEPv_problem} with \eqref{eq:NEPv}, to a problem with  eigenvalue nonlinearities. More precisely, we derive the equivalent equation 
\begin{equation}\label{eq:NEP}
 (A_0-\lambda E + \mu_1^2(\lambda) a_1\trans{a_1}+\cdots+\mu_m^2(\lambda) a_m\trans{a_m})v = 0, 
\end{equation}
where $\mu_1(\lambda), \dots, \mu_m(\lambda)$ are scalar functions of the eigenvalue. These scalar functions are determined from the solution to a parameterized polynomial system, and are therefore algebraic functions of $\lambda$.
\item[(b)] We observe that \eqref{eq:NEP} belongs to the much more common class of nonlinear eigenvalue problems (NEPs), where the nonlinearity  appears as a dependence on the eigenvalue.  This allows  us to leverage and specialize the use of efficient methods for these problems; see, e.g.,  \cite{Guttel2017,Mehrmann:2004:NLEVP,Ruhe:1973:NLEVP} for methods for NEPs. 
\end{itemize}
Decades of research in numerical linear algebra have produced robust
NEP-solvers capable of computing many eigenpairs for various types of large problems in a reliable way.
Therefore, one of the most important features of this combination of (a) and (b) lies in the fact that it allows us to compute several eigenpairs, in a reliable way. This  is very rare among methods for the NEPv, as well as for the GPE, where essentially it can be interpreted as the computation of excited states.

The formulation of \eqref{eq:NEP} involves a system of polynomial equations. More precisely, the functions $\mu_1,\ldots,\mu_m$ are implicitly defined from a parameterized polynomial system (derived in Section~\ref{sec:equivalence}), whose degree and size grows with $m$, but is independent of $n$. Hence, for every evaluation of the left-hand side of \eqref{eq:NEP} we must solve this system. The reliable computation of the solution requires particular attention. We therefore characterize the system and propose methods to solve it analytically for small $m$, and for larger $m$ (but still $m\ll n$) we explain how techniques from computational algebraic geometry can be used, e.g.,   \cite{HomotopyContinuation.jl}. We also specialize methods from multiparameter eigenvalue problem techniques, inspired by methods implemented in \cite{multipareig}. Theory and methods concerning this are provided in Section~\ref{sec:solving_poly_sys}.


The related literature can be classified as GPE-specific and 
method development driven from a numerical linear algebra perspective.
The the GPE, gradient-flow-based methods are among the most widely used \cite{Bao:2004:BOSEEINSTEIN} and, to the best of our knowledge,
constitute the state of the art.  The gradient flow approach exists in various forms, 
and in the context of a spatial finite element discretization, the choice of Sobolev space plays a crucial role \cite{Henning:2020:SOBOLEV}. See further literature discussion of gradient flow methods in \cite[Section~5]{Henning:2025:REVIEW}. We note that most results concerning the gradient flow, depend on results from the PDE discretization, usually a finite-element discretization, and in contrast, we assume a given discretization (of a GPE-like problem) and develop methods from a numerical linear algebra perspective.


An extension of the inverse iteration algorithm is discussed in \cite{Jarlebring:2014:INVIT}, with its convergence thoroughly examined in \cite{Hen22}. This algorithm does have an interpretation as a variation of a gradient flow 
\cite[Section~4]{Jarlebring:2014:INVIT}, and it can also be interpreted as an implicit iterative method \cite{Jarlebring:2021:IMPLICIT}.

There are works that target specific structures for the NEPv in \eqref{eq:full_NEPv_problem} that feature linearizations, in the sense of a companion linearization, as explored in \cite{Claes:2022:NEPvlin}. The work presented in \cite{Claes:2023:CONTOUR} engages with polynomial nonlinearities similar to our context, merging techniques from computational algebraic geometry with contour integration.  We note that, similar to our results, these methods can find several eigenpairs, but the assumptions on the structure of the nonlinearity are substantially different from our GPE-like structure. The NEPv has also been studied from a more theoretical perspective, particularly within the convergence theory of the self-consistent field (SCF) iteration, as documented in \cite{Yang:2009:SCF,Upadhyaya:2021:DENSITY,Bai:2022:SHARP,Lu:2024:LOCALLY}, and \cite{Bai:2024:VARIATIONAL}. Additionally, perturbation theory results have been reported in \cite{Cai:2020} and \cite{Truhar:2021}.

For completeness we now also describe some situations where the NEPv \eqref{eq:full_NEPv_problem} arises, but not necessarily with the structure \eqref{eq:NEPv}.
The problem appears in data science applications such as \cite{hein},\cite{Hein:2009:PLAPLACIAN}, where it stems from a variation of spectral clustering, 
resulting from a  regularization of the (standard) 2-Laplacian based clustering. One of the most competitive approaches for that problem class is based on Grassmannian optimization \cite{Pasadakis:2022:GRASSMANN}.  See also further data science applications in \cite{tudisco2018}. 
The problem also naturally arises in various theoretical and mathematical applications, notably in certain optimization problems. For instance, the Rayleigh quotient minimization techniques discussed in \cite{Lu2020}, and \cite{Bai2018} addresses NEPvs of the form \eqref{eq:full_NEPv_problem} by iteratively solving linear eigenvalue problems. Additionally, the backward error of specific eigenvalue nonlinearities - specifically, rational nonlinear eigenvalue problems using Rosenbrock linearization - can be characterized through NEPvs, as demonstrated in \cite{Lu:2024:ROSENBROCK}.

The paper is organized as follows. Section~\ref{sec:transformation} discusses the transformation of the problem into a system of the form \eqref{eq:NEP}, where the functions $\mu_1(\lambda),\ldots,\mu_m(\lambda)$ are defined as solutions to a $\lambda$-dependent polynomial system. Efficient and reliable solutions to this polynomial system are crucial in order to apply NEP methods. In Section~\ref{sec:solving_poly_sys}, we address numerical techniques for solving the system efficiently. For the case of $m=2$, we derive an analytical solution, while for the general case, we present an approach based on multiparameter eigenvalue problems (MEPs).  Section~\ref{sec:implementation} provides implementation details on how to effectively combine MEP and NEP methods to enable the computation of multiple eigenvalues through deflation. Drawing on established properties of NEP methods, we propose a specific NEP method for our context. Finally, numerical simulations of a GPE-like problem are presented in Section~\ref{sec:num_examples}.

\section{Transformation}\label{sec:transformation}
\subsection{A small introductory example}
To illustrate the technique and the derivation, we consider the following small example containing only one nonlinear term
\begin{equation}\label{eq:first_example}
    \lambda v=\left(A_0+(\trans{v}a_1)^2a_1\trans{a_1}\right)v,\qquad \|v\|=1,
\end{equation}
with $A_0\in\RR^{2\times 2}$ and $a_1\in\RR^{2}$ given by
\begin{equation}\label{eq:first_example_matrices}
    A_0:=
\begin{bmatrix}
4 & 1 \\
1 & 6
\end{bmatrix},\quad
a_1:=\begin{bmatrix}
  3\\[2pt]2
\end{bmatrix}.
\end{equation}
Let $\mu:=\trans{v}a_1$. Rearranging \eqref{eq:first_example} gives
\[
(\lambda I-A_0)v=\mu^3 a_1,\qquad\text{so}\qquad
v=\mu^3(\lambda I-A_0)^{-1}a_1,
\]
whenever $\lambda I-A_0$ is invertible. 
Using $\|v\|=1$ yields
\[
1=\|v\|^2=\mu^6\,\trans{a_1}(\lambda I-A_0)^{-2}a_1,
\]
which can be solved for $\mu^2$ as a function of $\lambda$:
\begin{equation}\label{eq:mu1_simple_example}
f(\lambda):=\mu^2=
\left(\frac{(\lambda^2 - 10\lambda + 23)^2}{13\lambda^2 - 116\lambda + 281}\right)^{1/3}.
\end{equation}
The function $f(\lambda)$ is visualized in Figure~\ref{fig:mu-sing}. Substituting $\mu^2=f(\lambda)$ back into \eqref{eq:first_example} yields the NEP
\begin{equation}\label{eq:first_example_NEP}
    0=(A_0-\lambda I+f(\lambda) \trans{a_1a_1})v.
\end{equation}
By construction, any solution of \eqref{eq:first_example} solves \eqref{eq:first_example_NEP}; conversely, as shown below, any solution of \eqref{eq:first_example_NEP} that is not an eigenvalue of $A_0$, also solves \eqref{eq:first_example}.

In principle, almost any NEP-solver could be used to solve \eqref{eq:first_example_NEP}. Using a Newton method with deflation for the NEP (see Section~\ref{sec:deflation}), we compute the two eigenvalues $\lambda\approx 4.2175$ and $\lambda\approx 174.5385$, which constitute all the solutions to \eqref{eq:first_example}. 
A key message of this paper is that NEPvs are typically harder to solve reliably than NEPs. In the present example, without prior information about the large eigenvalue, it could easily be missed, e.g., with the repeated application of a locally convergent NEPv algorithm one could easily miss it due to reconvergence to a previously found solution. 


\begin{figure}[H]
    \centering
    \includegraphics{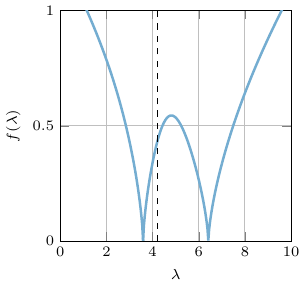}
    \caption{\ejcomment{The function $f(\lambda)$ in \eqref{eq:mu1_simple_example} and an eigenvalue, indicated by the dashed line.}}
    \label{fig:mu-sing}
\end{figure}
\end{changed}
\subsection{A parametrized polynomial system}\label{sec:param_pol_sys}
We derive the equations defining the NEP \eqref{eq:NEP},
by considering a particular transformation 
of \eqref{eq:full_NEPv_problem} that eliminates the presence of $v$, and instead, the nonlinearity appears only in terms of the scalars
\begin{equation}    
\mu_i=\trans{a_i}v,\;\;\;i=1,\ldots,m,\label{eq:mui_def}
\end{equation}
which are later interpreted as functions of $\lambda$.
By multiplying \eqref{eq:NEPv} from the left with $(\lambda E -A_0)^{-1}$ we obtain a formula for $v$
consisting of  a linear combination of $\lambda$-dependent \ejcomment{vectors
\begin{equation}\label{eq:v_factorized}
  v=\mu_1^3 (\lambda E -A_0)^{-1}a_1+\cdots +\mu_m^3(\lambda E -A_0)^{-1}a_m. 
\end{equation}
Note that the right-hand side of \eqref{eq:v_factorized} is independent of $v$.
By applying the normalization condition \eqref{eq:NEPv_norm_cond} on the vector $v$, we obtain
\begin{equation}
\begin{split}\label{eq:normalization_relation}
    1&=\trans{v}Bv\\
    &=\sum_{i,j\leq m} \mu_i^3\mu_j^3\trans{a_i}(\lambda E -A_0)^{-1}B(\lambda E -A_0)^{-1}a_j .
\end{split}
\end{equation}
}
Hence, the normalization gives a scalar condition involving the $m$ scalar functions $\mu_1,\ldots,\mu_m$, as well as the scalar $\lambda$.

We can obtain further polynomial relations between the scalars by multiplying equation \eqref{eq:v_factorized} from the left by $\trans{a_\ell}$, 
\ejcomment{
leading to 
\begin{equation}\label{eq:mui_relation}
    \mu_\ell=\trans{a_\ell}v=
    \mu_1^3\trans{a_\ell}(\lambda E -A_0)^{-1}a_1+
    \cdots+
    \mu_m^3\trans{a_\ell}(\lambda E -A_0)^{-1}a_m,
\end{equation}}
for any $\ell=1,\ldots,m$.
To state these relations more concisely, let $\mu\in\RR^m$ be the vector
\begin{equation}
    \mu = \trans{[\mu_1, \dots, \mu_m]},
\end{equation}
and let $\mu^p$, denote element-wise exponentiation of $\mu$.
Let $A_m$ be the matrix
\begin{equation}
    A_m := \left[a_1, \dots, a_m\right]\in\RR^{n\times m}.
\end{equation}
Using this new notation, the relations \eqref{eq:normalization_relation} and \eqref{eq:mui_relation} are summarized in a more concise way in the following proposition.

\begin{proposition}\label{prop:polysys_satisfied}
    Let $G(\lambda)$ be a matrix whose elements depend on the parameter $\lambda$, defined by
    \begin{equation}\label{eq:G_def}
        G(\lambda) := \trans{A_m}(\lambda E-A_0)^{-1}B(\lambda E-A_0)^{-1}A_m,
    \end{equation}
    and similarly let $H(\lambda)$ be 
    \begin{equation}\label{eq:H_def}
        H(\lambda) := \trans{A_m}(\lambda E-A_0)^{-1}A_m.
    \end{equation} If $(\lambda, v)$ is an eigenpair of \eqref{eq:full_NEPv_problem}, and $\lambda$ is not an eigenvalue of the pencil $A_0-\lambda E$, then the following equations are satisfied
    \begin{subequations}\label{eq:poly_sys_mat_form}
    \begin{eqnarray}
        & &\trans{(\mu^3)}G(\lambda)\mu^3 - 1 = 0\label{eq:norm_equations_mat_form},\\
        & &H(\lambda)\mu^3 - \mu = 0. \label{eq:mui_relations_mat_form}
    \end{eqnarray}
    \end{subequations}
\end{proposition}
\ejcomment{\begin{proof} We note that $\lambda E-A_0$ is invertible if $\lambda$ is not an eigenvalue of the pencil, such that \eqref{eq:normalization_relation} and \eqref{eq:mui_relation} can be evaluated. The direct collection of the terms in 
\eqref{eq:normalization_relation} and \eqref{eq:mui_relation} yields 
\eqref{eq:norm_equations_mat_form} and \eqref{eq:mui_relations_mat_form}. 
\end{proof}}
\vpcomment{The system \eqref{eq:poly_sys_mat_form} has $m+1$ unknowns and $m+1$ equations.} 
\ejcomment{The main scientific contribution of this paper is a transformation of the NEPv \eqref{eq:full_NEPv_problem} with \eqref{eq:NEPv} to a NEP \eqref{eq:NEP}, and in order to achieve this, we need to construct functions $\mu_1(\lambda),\ldots,\mu_m(\lambda)$.  For the purpose of obtaining such a  transformation we now neglect one of the equations, such that we obtain $m$ equations in $m+1$ unknowns.
Neglecting the normalization equation \eqref{eq:norm_equations_mat_form}, is unsuitable, and one can show that this yields a singular system. 
The remaining equations \eqref{eq:mui_relations_mat_form} can be reordered arbitrarily. We can therefore, without loss of generality, assume that we remove the last equation in \eqref{eq:mui_relations_mat_form}, which can formally be represented as the multiplication with
\[
P=\begin{bmatrix}I & 0\end{bmatrix}\in\RR^{(m-1)\times m},
\]
where $I\in\RR^{(m-1)\times (m-1)}$ is the identity matrix.  
This yields the equations}
\begin{subequations}\label{eq:reduced_syst_mat_form}
\begin{eqnarray}
    & &\trans{(\mu^3)}G(\lambda)\mu^3-1 = 0,\\
    & &P\left(H(\lambda)\mu^3-\mu\right) = 0\label{eq:P_eqns}.
\end{eqnarray}
\end{subequations}
In the next section we will use these equations to define the functions $\mu_1(\lambda),\dots,\mu_m(\lambda)$. Conditions on the existence of implicit functions can be determined from the Jacobian of the system of equations, which in this case is explicitly given by the following formula.
\begin{proposition}\label{thm:jacobian}
    The Jacobian of the reduced system \eqref{eq:reduced_syst_mat_form} with respect to $\mu$ at a point $(\lambda, \mu)$, where $\lambda$ is not an eigenvalue of the pencil $A_0-\lambda E$, is given by
    \begin{equation}\label{eq:system_jac}
        J(\lambda, \mu) = 
        \begin{bmatrix}
            6\trans{(\mu^3)}G(\lambda)\diag(\mu^2) \\ 
            P(3H(\lambda)\diag(\mu^2) - I)
        \end{bmatrix} \in \RR^{m\times m},
    \end{equation}
    where $\diag(\mu^2)$ denotes the diagonal matrix whose diagonal elements are those of $\mu^2$.
\end{proposition}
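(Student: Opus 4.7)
The plan is to compute the Jacobian by direct differentiation of the two blocks of the reduced system \eqref{eq:reduced_syst_mat_form} with respect to $\mu$, using the chain rule for the elementwise cube $\mu^3$ together with the symmetry of $G(\lambda)$. The key observation is that the Jacobian of the elementwise map $\mu\mapsto\mu^3$ is the diagonal matrix $3\diag(\mu^2)$, so all nonlinearity in $\mu$ reduces to this factor.

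First I would treat the scalar normalization equation $(\mu^3)^T G(\lambda)\mu^3-1=0$. Since $E$, $A_0$, and $B$ are all symmetric, the matrix $(\lambda E-A_0)^{-1}B(\lambda E-A_0)^{-1}$ is symmetric for any $\lambda$ that is not an eigenvalue of $A_0-\lambda E$, and so $G(\lambda)=G(\lambda)^T$. Applying the product rule to the quadratic form in $\mu^3$ and using symmetry, the total $\mu$-derivative becomes
\begin{equation*}
    \nabla_\mu\bigl((\mu^3)^T G(\lambda)\mu^3\bigr) = 2(\mu^3)^T G(\lambda)\cdot\tfrac{d\mu^3}{d\mu} = 2(\mu^3)^T G(\lambda)\cdot 3\diag(\mu^2),
\end{equation*}
which gives the first row $6(\mu^3)^T G(\lambda)\diag(\mu^2)$ of $J(\lambda,\mu)$.

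Next I would differentiate the vector-valued equation $P(H(\lambda)\mu^3-\mu)=0$. Since $H(\lambda)$ does not depend on $\mu$ and $P$ is a constant matrix, the chain rule gives
\begin{equation*}
    \frac{\partial}{\partial \mu}\bigl[P(H(\lambda)\mu^3-\mu)\bigr] = P\bigl(H(\lambda)\cdot 3\diag(\mu^2)-I\bigr) = P\bigl(3H(\lambda)\diag(\mu^2)-I\bigr),
\end{equation*}
which is exactly the lower block of \eqref{eq:system_jac}. Stacking the two blocks yields the claimed expression for $J(\lambda,\mu)\in\RR^{m\times m}$ ($1$ row from the normalization plus $m-1$ rows from the $P$-selected equations).

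There is no real obstacle here beyond careful bookkeeping; the only subtlety is recognizing that symmetry of $G(\lambda)$ is what collapses the two product-rule terms of the normalization block into a single expression of the form $(\mu^3)^T G(\lambda)\diag(\mu^2)$ rather than a sum $(\mu^3)^T(G(\lambda)+G(\lambda)^T)\diag(\mu^2)/2$. Once this is noted, the rest is mechanical differentiation, and the hypothesis that $\lambda$ is not an eigenvalue of the pencil is used only to ensure that $G(\lambda)$ and $H(\lambda)$ (and hence the whole expression) are well defined.
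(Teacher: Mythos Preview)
Your proposal is correct and follows essentially the same approach as the paper: differentiate the two blocks separately, use the symmetry of $G(\lambda)$ to collapse the product-rule terms in the normalization row, and apply the chain rule with $d\mu^3/d\mu=3\diag(\mu^2)$ for both blocks. The paper carries out the second block entrywise before reassembling it into matrix form, whereas you apply the matrix chain rule directly, but this is a cosmetic difference rather than a different route.
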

\begin{proof}
    Fix $\lambda$ to be as above.
    Since $G(\lambda)$ is symmetric, we have
    \begin{equation}
        \frac{\partial}{\partial \mu}\left[\trans{(\mu^3)}G(\lambda)\mu^3\right] = \trans{(\mu^3)}(G(\lambda) + \trans{G(\lambda)})3\diag(\mu^2) = 6\trans{(\mu^3)}G(\lambda)\diag(\mu^2),
    \end{equation}
    which gives the first row in \eqref{eq:system_jac}.
    For the remaining entries in \eqref{eq:system_jac}, consider the derivative with respect to $\mu_\ell$, $1\leq \ell \leq m$, of row $k$ of \eqref{eq:mui_relations_mat_form}
    \begin{equation}
        \frac{\partial}{\partial \mu_\ell}\left(h_{k1}\mu_1^3 + \dots + h_{km}\mu_m^3 - \mu_m \right) = 3h_{k\ell}\mu_\ell^2 - \delta_{\ell m}, \quad 1\leq k \leq m,
    \end{equation}
    with $\delta_{ij}$ denoting the Kronecker delta.
    Then the Jacobian of \eqref{eq:mui_relations_mat_form} with respect to $\mu$ becomes 
    \begin{equation}
        \frac{\partial}{\partial \mu}\left[H(\lambda)\mu^3-\mu\right] = 3H(\lambda)\diag(\mu^2) - I,
    \end{equation}
    and selecting $m-1$ rows from this Jacobian by application of $P$ gives the remaining rows in \eqref{eq:system_jac}.
\end{proof}

\subsection{Equivalence with implicit functions}\label{sec:equivalence}
In the previous section, we constructed a polynomial system from the NEPv \eqref{eq:full_NEPv_problem}, involving $\mu_1,\ldots,\mu_m,\lambda$ and $m$ equations.
This polynomial system does not contain the solution $v$.
Since we have $m+1$ variables and $m$ equations, the implicit function theorem can be applied whenever the Jacobian is non-singular.
This leads to implicit functions $\mu_1(\lambda),\ldots,\mu_m(\lambda)$, that define a NEP \eqref{eq:NEP}, whose solutions coincide with the solutions to \eqref{eq:full_NEPv_problem}, in a neighborhood of a given non-singular point.

\ejcomment{\begin{theorem}\label{thm:main_thm}
   Let $(\tilde{\lambda}, \tilde{\mu})$ be a solution to the polynomial system \eqref{eq:reduced_syst_mat_form} and assume the corresponding system Jacobian \eqref{eq:system_jac} is nonsingular.
\begin{itemize}
    \item [(a)] Then there exist unique functions $\mu(\lambda) = \trans{[\mu_1(\lambda), \dots, \mu_m(\lambda)]}$, continuous in a neighborhood of $\tilde{\lambda}$, denoted $D\subset\RR$, such that $(\lambda, \mu(\lambda))$ satisfies \eqref{eq:reduced_syst_mat_form} for all $\lambda\in D$, and $\tilde{\mu} = \mu(\tilde{\lambda})$. 
\end{itemize} 
   Moreover, for any $(\lambda_\ast, v_\ast)$ such that $\lambda_\ast\in D$, the following two statements are equivalent.
\begin{itemize}
    \item[(b)] The pair $(\lambda_\ast, v_\ast)$ is a solution to  \eqref{eq:full_NEPv_problem}.
    \item[(c)] The pair $(\lambda_\ast, v_\ast)$ is a solution to  \eqref{eq:NEP} with the functions defined by $\mu$ and $\norm{v_*}_B^2 = 1$.
\end{itemize}
\end{theorem}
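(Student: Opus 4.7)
My plan is to prove the theorem in three stages: first, the Implicit Function Theorem is used to produce the functions $\mu(\lambda)$; then each direction of the equivalence is obtained by essentially running the derivation of Section~\ref{sec:param_pol_sys} forward and backward. The reduced polynomial system \eqref{eq:reduced_syst_mat_form} is defined by polynomial (hence $C^\infty$) functions of $(\lambda,\mu)$, and its Jacobian with respect to $\mu$ at the base point $(\tilde\lambda,\tilde\mu)$ is precisely the matrix $J(\tilde\lambda,\tilde\mu)$ from Proposition~\ref{thm:jacobian}, which is nonsingular by hypothesis. The Implicit Function Theorem then delivers an open neighborhood $D\subset\RR$ of $\tilde\lambda$ and a unique continuous map $\mu:D\to\RR^m$ with $\mu(\tilde\lambda)=\tilde\mu$ such that $(\lambda,\mu(\lambda))$ satisfies \eqref{eq:reduced_syst_mat_form} for every $\lambda\in D$.

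\emph{Forward direction (NEPv $\Rightarrow$ NEP).} Suppose $(\lambda_*,v_*)$ solves \eqref{eq:full_NEPv_problem} with $\lambda_*\in D$, and set $\nu_i:=a_i^T v_*$. Retracing the derivations of Section~\ref{sec:param_pol_sys} --- solving \eqref{eq:NEPv} for $v_*$, pre-multiplying by $a_\ell^T$, and inserting the resulting representation into the normalization \eqref{eq:NEPv_norm_cond} --- shows that $(\lambda_*,\nu)$ satisfies the full system \eqref{eq: poly_sys_mat_form} and hence the reduced system \eqref{eq:reduced_syst_mat_form}. Shrinking $D$ if necessary so that the IFT uniqueness conclusion applies to $\nu$, I obtain $\nu=\mu(\lambda_*)$, whence $(a_i^T v_*)^2=\mu_i^2(\lambda_*)$ and \eqref{eq:NEPv} rearranges to \eqref{eq:NEP}; the condition $\|v_*\|_B^2=1$ is precisely \eqref{eq:NEPv_norm_cond}.

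\emph{Backward direction (NEP $\Rightarrow$ NEPv).} Suppose $(\lambda_*,v_*)$ solves \eqref{eq:NEP} with $\mu(\lambda_*)$ substituted and $\|v_*\|_B^2=1$, and let $\nu_i:=a_i^T v_*$. Solving \eqref{eq:NEP} for $v_*$ gives $v_*=\sum_i\mu_i^2(\lambda_*)\nu_i f_i(\lambda_*)$, and premultiplying by $a_\ell^T$ shows that $\nu$ lies in the null space of $H(\lambda_*)\diag(\mu^2(\lambda_*))-I$. The polynomial relation \eqref{eq:mui_relations_mat_form} evaluated at $\mu(\lambda_*)$ places $\mu(\lambda_*)$ in the same null space, and the normalization, which after substitution reads $(\diag(\mu^2(\lambda_*))\nu)^T G(\lambda_*)\diag(\mu^2(\lambda_*))\nu=1$, combined with this linear relation forces $\nu=\pm\mu(\lambda_*)$ componentwise. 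Thus $(a_i^T v_*)^2=\mu_i^2(\lambda_*)$ for every $i$, so \eqref{eq:NEP} rearranges back to \eqref{eq:NEPv}; the residual sign ambiguity is harmless because \eqref{eq:full_NEPv_problem} is invariant under $v_*\mapsto-v_*$.

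The delicate step --- and thus the main obstacle --- is the rigidity argument in the backward direction: the null space of $H(\lambda_*)\diag(\mu^2(\lambda_*))-I$ need not be one-dimensional a priori, so pinning $\nu$ down to $\pm\mu(\lambda_*)$ requires carefully combining the linear null-space condition with the quadratic $G$-normalization. A related subtlety is that the reduced system only enforces $m-1$ of the relations \eqref{eq:mui_relations_mat_form}, so ensuring that $\mu(\lambda_*)$ satisfies the full set on $D$ --- needed to place it in the above null space --- may demand an additional continuity argument or a further restriction of $D$.
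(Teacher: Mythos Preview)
Your three-stage plan coincides with the paper's proof: invoke the Implicit Function Theorem for the first assertion, note that the forward implication holds ``by construction,'' and for the backward implication rewrite the NEP solution to recover a polynomial system satisfied by $\xi(v_*)=(a_1^Tv_*,\dots,a_m^Tv_*)^T$ and compare with $\mu(\lambda_*)$.

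The paper's backward argument differs from yours in one useful way that removes your second subtlety outright. Rather than passing to the \emph{full} null space of $H(\lambda_*)\diag(\mu^2(\lambda_*))-I$ (which would require $\mu(\lambda_*)$ to satisfy all $m$ rows of \eqref{eq:mui_relations_mat_form}), the paper applies $P$ already when deriving the system for $\xi(v_*)$, obtaining
\[
(\diag(\mu^2)\xi)^T G(\lambda_*)\diag(\mu^2)\xi=1,\qquad P\bigl(H(\lambda_*)\diag(\mu^2)\xi-\xi\bigr)=0,
\]
and then observes that substituting $\xi=\mu(\lambda_*)$ collapses this system exactly to \eqref{eq:reduced_syst_mat_form}. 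Thus $\mu(\lambda_*)$ and $\xi(v_*)$ are both solutions of the \emph{same} $m$-equation system, and no appeal to the discarded row of \eqref{eq:mui_relations_mat_form} is needed.

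Your first subtlety---why this forces $\xi(v_*)=\mu(\lambda_*)$---is genuine, and the paper does not spell it out either: it simply concludes ``Hence, we have $\mu(\lambda_*)=\xi(v_*)$.'' Note in particular that the Jacobian of the displayed system above with respect to $\xi$ at $\xi=\mu$ has rows $2(\mu^3)^TG\diag(\mu^2)$ and $P(H\diag(\mu^2)-I)$, which is \emph{not} the matrix $J(\lambda,\mu)$ of Proposition~\ref{thm:jacobian} (the factors $6$ and $3$ become $2$ and $1$), so nonsingularity of $J$ does not automatically give local uniqueness for the $\xi$-system. In short: your concern about rigidity is well placed, your proposed route via the full null space makes it harder than necessary, and the paper's shortcut of staying with the $P$-reduced system is the cleaner framing---but even there the final uniqueness step is asserted rather than proved.
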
}
\ejcomment{\begin{proof}
    Statement (a) follows from the implicit function theorem \cite[Theorem~9.28]{Rudin:1976:PRINCIPLES}. More precisely,
    $\lambda$ is viewed as the free variable and the condition that the Jacobian with respect to the dependent variables $\mu_1,\ldots,\mu_m$ is nonsingular implies that functions having the stated properties exist.
    

    For the implications involving statement (b) and (c), we note that the NEP \eqref{eq:NEP} and the  NEPv 
    \eqref{eq:NEPv} 
    are the same if 
    \begin{equation}\label{eq:mu_is_aiv}
       \mu_i^2(\lambda_\ast) = (\trans{a_i}v_\ast)^2, \quad i=1,\dots,m.
    \end{equation}

   We now prove that (b) implies (c).  
   Since $(\lambda_*,v_*)$ are given as a solution to the NEPv \eqref{eq:NEPv}, we can define 
   $\mu_{*,i}:=\trans{a_i}v_*$ according to \eqref{eq:mui_def}.
   By Proposition~\ref{prop:polysys_satisfied}, $(\lambda_*,\mu_*)$ is a solution to 
   \eqref{eq:poly_sys_mat_form} and therefore also \eqref{eq:reduced_syst_mat_form}. Hence, by point (a) the unique functions satisfy \eqref{eq:mu_is_aiv},
   which implies that \eqref{eq:NEP} is satisfied.
   
   Finally we prove that (c) implies (b). 
    Assume that $(\lambda_\ast, v_\ast)$ is an eigenpair of \eqref{eq:NEP}, with $\lambda_\ast\in D$.
    Since $(\lambda_\ast, v_\ast)$ is an eigenpair of \eqref{eq:NEP}, we can rearrange the terms and obtain
    \begin{equation}
        v_\ast = (\lambda E-A_0)^{-1}\left(\mu_1^2(\lambda_\ast)(\trans{a_1}v_\ast)a_1 + \dots + \mu_m^2(\lambda_\ast)(\trans{a_m}v_\ast)a_m  \right).
    \end{equation}
    We can again define $\mu_{*,i}:=\trans{a_i}v_*$.
    Applying the normalization condition, together with multiplying from the left with $\trans{a_i}$, for $i=1, \dots, m$, and extracting $m$ rows using the matrix $P$, yields
    \begin{subequations}\label{eq:poly_sys_equiv_proof}
    \begin{eqnarray}
        & &\trans{[\mu(\lambda_\ast)^2\cdot\mu_*]}G(\lambda_\ast)[\mu(\lambda_\ast)^2\cdot\mu_*]-1 = 0,\\
        & &P\left(H(\lambda_\ast)[\mu(\lambda_\ast)^2\cdot\mu_*]-\mu_*\right) = 0.
    \end{eqnarray}
    \end{subequations}
    Since $\lambda_\ast\in D$, $\mu(\lambda_\ast)$ must satisfy both \eqref{eq:poly_sys_equiv_proof} and \eqref{eq:reduced_syst_mat_form} simultaneously.
    Hence, we have $\mu(\lambda_\ast) =\mu_*$, i.e., \eqref{eq:mu_is_aiv} is satisfied. 
\end{proof}}

\section{Solving the polynomial system}\label{sec:solving_poly_sys}

In order to develop methods based on the above construction, we now investigate and characterize the polynomial system.
When $m=2$, we have explicit solutions (given in Section~\ref{sec:two_nonlins}), and for the general case we propose an additional transformation (given in Section~\ref{sec:more_nonlins}) that 
can be used as a basis of method development. 

\subsection{Two nonlinear terms}\label{sec:two_nonlins}

\subsubsection{Explicit solution for two terms}\label{sec:two_nonlins_theory}
For $m=2$ the polynomial system can be approached analytically. 
The following corollary describes how the functions $\mu_1^2$ and $\mu_2^2$ are computationally tractable via the roots of a third-degree polynomial.

\begin{corollary}\label{cor:main_thm_cor}
    Suppose $D\subset\RR$, $\mu_1,\mu_2$ are as in Theorem~\ref{thm:main_thm}, with $m=2$, and $\lambda\in D$ is given.
    Denote by $h_{ij}$ and $g_{ij}$ the element $(i,j)$ of $H(\lambda)$ and $G(\lambda)$,
    respectively.
    Then $\mu_1^2(\lambda)=:\gamma$ satisfies the degree-three polynomial
    \begin{equation}\label{eq:corr_3_deg_pol}
        \gamma^3[h_{12}^2g_{11} - 2h_{12}h_{11}g_{12} + h_{11}^2g_{22}] + \gamma^2[2h_{12}g_{12}-2h_{11}g_{22}] + \gamma g_{22} - h_{12} = 0,
    \end{equation}
    and $\mu_2(\lambda)$ satisfies the equation 
    \begin{equation}\label{eq:mu_2_pol_two_nonlins}
        \mu_1-h_{11}\mu_1^3 = h_{12}\mu_2^3.
    \end{equation}
\end{corollary}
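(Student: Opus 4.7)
The plan is to reduce the $m=2$ version of the polynomial system \eqref{eq:reduced_syst_mat_form} to a single univariate polynomial in $\gamma := \mu_1^2$ by using one equation to eliminate $\mu_2^3$. With $P = [1, 0] \in \mathbb{R}^{1\times 2}$ selecting the first row of \eqref{eq:mui_relations_mat_form}, the system reduces to the two scalar equations
\begin{subequations}
\begin{eqnarray}
g_{11}\mu_1^6 + 2g_{12}\mu_1^3\mu_2^3 + g_{22}\mu_2^6 - 1 & = & 0, \\
h_{11}\mu_1^3 + h_{12}\mu_2^3 - \mu_1 & = & 0,
\end{eqnarray}
\end{subequations}
where I used the symmetry of $G(\lambda)$. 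The second equation is exactly the claimed relation \eqref{eq:mu_2_pol_two_nonlins}, establishing that part of the corollary.

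Next, assuming $h_{12} \neq 0$ (which is permitted since $\lambda\in D$ and the Jacobian is nonsingular at the point under consideration), I solve the second equation for
\[
\mu_2^3 \;=\; \frac{\mu_1 - h_{11}\mu_1^3}{h_{12}},
\]
and substitute into the first equation. After clearing denominators by multiplying through by $h_{12}^2$, one obtains
\[
h_{12}^2 g_{11}\mu_1^6 + 2 h_{12} g_{12}\mu_1^3(\mu_1 - h_{11}\mu_1^3) + g_{22}(\mu_1 - h_{11}\mu_1^3)^2 - h_{12}^2 \;=\; 0.
\]
Every monomial in this expression involves an even power of $\mu_1$, namely $\mu_1^2$, $\mu_1^4$, or $\mu_1^6$, so the substitution $\gamma = \mu_1^2$ produces a polynomial identity in $\gamma$ alone.

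Finally, I would collect terms by powers of $\gamma$: the $\gamma^3$ contributions come from $\mu_1^6$ terms and assemble into $h_{12}^2 g_{11} - 2 h_{12} h_{11} g_{12} + h_{11}^2 g_{22}$; the $\gamma^2$ contributions come from $\mu_1^4$ terms and yield $2 h_{12} g_{12} - 2 h_{11} g_{22}$; the $\gamma$ contribution is $g_{22}$; and the constant term comes from the isolated $-h_{12}^2$. Matching this against \eqref{eq:corr_3_deg_pol} completes the derivation.

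The argument is essentially mechanical, so the main obstacle is simply bookkeeping in the substitution and collection steps; there is no conceptual difficulty beyond justifying $h_{12} \neq 0$ (which follows from the nonsingularity of the Jacobian \eqref{eq:system_jac} assumed via Theorem~\ref{thm:main_thm}, since $h_{12} = 0$ together with the structure of $G$ and $H$ would force the bottom row of $J$ to be incompatible with nonsingularity in generic situations).
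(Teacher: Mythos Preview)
Your approach is essentially identical to the paper's: write out the two scalar equations from \eqref{eq:reduced_syst_mat_form} with $P=[1\ 0]$, use the $H$-equation to eliminate $\mu_2^3$, and collect powers of $\gamma=\mu_1^2$. The only cosmetic difference is that the paper multiplies \eqref{eq:first_eq_deg_two_proof} by $h_{12}^2$ \emph{before} substituting $h_{12}\mu_2^3=\mu_1-h_{11}\mu_1^3$, which avoids ever dividing by $h_{12}$ and so makes your side discussion of $h_{12}\neq 0$ unnecessary; note also that your computation correctly produces the constant term $-h_{12}^2$, which is indeed what the derivation yields (the $-h_{12}$ printed in \eqref{eq:corr_3_deg_pol} appears to be a typographical slip).
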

\begin{proof}
    The system of equations \eqref{eq:reduced_syst_mat_form} with this selection of $P$ leads to
    \begin{equation}
    g_{11}\mu_1^6+2g_{12}\mu_1^3\mu_2^3+g_{22}\mu_2^6=1,\label{eq:first_eq_deg_two_proof}
    \end{equation}
    and \eqref{eq:mu_2_pol_two_nonlins}.
    By inserting \eqref{eq:mu_2_pol_two_nonlins} into \eqref{eq:first_eq_deg_two_proof} after multiplication by $h_{12}^2$, we obtain
    \begin{equation}
        h_{12}^2g_{11}\mu_1^6+
        2h_{12}g_{12}\mu_1^3(\mu_1-h_{11}\mu_1^3)+
        g_{22}(\mu_1-h_{11}\mu_1^3)^2=h_{12}^2.
    \end{equation}
    Expanding the square, setting $\gamma = \mu_1^2$, and collecting terms yields \eqref{eq:corr_3_deg_pol}.
\end{proof}

\begin{figure}[H]
    \centering
    \includegraphics{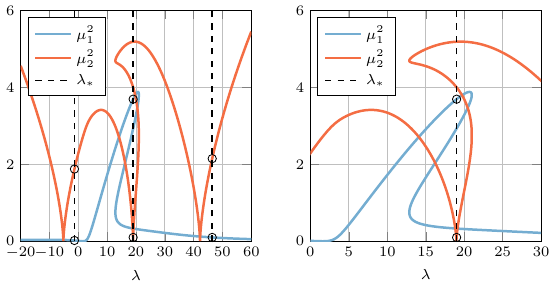}
    \caption{The curves $\mu_1^2$ and $\mu_2^2$ corresponding to the choice \eqref{eq:ill_ex_mats} plotted over an interval containing three eigenvalues.
    The black circles $(\boldsymbol{\circ})$ indicate which branch of the functions correspond to the computed eigenvalues.}
    \label{fig:mucurves_ill_ex}
\end{figure}

\subsubsection{Illustrative example}\label{sec:two_nonlins_ill_ex}
Let the matrix $A_0$ and the vectors $a_1,a_2$ be 
\begin{equation}
    A_0 = \begin{bmatrix}
        6 & 5 & 4 \\
        5 & 16 & 23\\
        4 & 23 & 20
    \end{bmatrix}, \: a_1 = \begin{bmatrix}
        2 \\ 0 \\ 0
    \end{bmatrix}, \: a_2 = \begin{bmatrix}
        0 \\ 2 \\ 0
    \end{bmatrix}.
    \label{eq:ill_ex_mats}
\end{equation}
We compute and plot the curves $\mu_1^2, \mu_2^2$ over an interval containing three eigenvalues of the NEPv corresponding to \eqref{eq:ill_ex_mats}. The curves are visualized in Figure~\ref{fig:mucurves_ill_ex}.
For this example, we discard the second equation in \eqref{eq:P_eqns}, and compute the eigenvalues by solving the equivalent NEP.
The three computed eigenvalues are given by $\lambda_{\ast,1} \approx -1.3447$, $\lambda_{\ast,2} \approx 19.0165$, $\lambda_{\ast,3} \approx 46.4337$, with the corresponding normalized eigenvectors 
$v_{\ast, 1} \approx \trans{[0.0708, -0.6851, 0.7250]}$, $v_{\ast,2} \approx \trans{[0.9611, -0.1575, -0.2269]}$, $v_{\ast,3} \approx \trans{[0.1577, 0.7330, 0.6617]}$.

Since $\mu_1^2$ and $\mu_2^2$ are algebraic functions, they possess branch point singularities, 
but are otherwise smooth functions of $\lambda$, making them suitable for use in general-purpose NEP solvers.
In these regions our approach defines a unique NEP.
However, there are also regions in which the functions become multi-valued. \ejcomment{This does not violate the theory concerning existence of the functions, since Theorem~\ref{thm:main_thm} only guarantees existence in a neighborhood of a nonsingular point in $(\mu_1,\ldots,\mu_m)$-space, and one fixed $\lambda$ can correspond to several such points. However, it does require  practical considerations in the context of an implementation. }
In these regions we can define more than one NEP, and a selection among them becomes necessary.
This can be handled in a variety of ways, for instance sorting them with respect to residual, or by proximity to some target value.
For this example, we have selected the branch closest to a target point. Although this multi-valued aspect of the approach is potentially a source of complication, it was easily manageable for the functions in Figure~\ref{fig:mucurves_ill_ex}.  We note that for the GPE-inspired example in Section~\ref{sec:num_examples} the functions appear to be single-valued.

\subsubsection{Perturbation analysis of singularities for two nonlinear terms}\label{sec:singular_two}
\ejcomment{The above construction has been theoretical in character. 
In practice we want to solve \eqref{eq:NEP} with a NEP-solver. 
The performance and reliability of NEP-solvers is affected by the presence and type of singularities. Branch point singularities become more problematic in this setting if the order is high, i.e., $(\cdot)^{1/k}$, where $k\gg 1$. In the following we show that the usual situation corresponds to $k=3$ for the constructed functions.}

In the illustrative example in Section~\ref{sec:two_nonlins_ill_ex}, we observe that singularities occur at points $\lambda_\ast$ where $\mu_2^2(\lambda_\ast)=0$.
In relation to Theorem~\ref{thm:main_thm}, this corresponds to a neighborhood where there exists no implicit function, due to the fact that when $\mu_2=0$, the Jacobian given by Proposition~\ref{thm:jacobian} has the structure
\[
J=\begin{bmatrix}\times & 0\\ \times & 0\end{bmatrix}.
\]
The Jacobian is singular and Theorem~\ref{thm:main_thm} is not applicable. 
It turns out that close to these singularities, $\mu_2^2$ behaves as 
$\mu_2^2=\mathcal{O}(|\lambda-\lambda_\ast|^{\frac{2}{3}})$.
We prove this asymptotic behavior, where we reuse the notation from Corollary~\ref{cor:main_thm_cor}.
First we will state a technical lemma (Lemma~\ref{lemma:mu1_analytic}) characterizing the admissible function values of $\mu_1^2$ and $\mu_2^2$ at such points. \ejcomment{This is subsequently used (in Proposition~\ref{thm:singpoints2_exp}) to determine the expansion, and the order of the branch point singularity.

To prove the following results, we need results from complex analysis. In particular we note that since the functions $\mu_1,\ldots,\mu_m$ are solutions to a polynomial system for every fixed $\lambda$, the functions can be extended analytically everywhere except for the branch point singularies and the poles.}

\begin{lemma}\label{lemma:mu1_analytic}
    Let $\mu_1, \mu_2$ be as in Theorem~\ref{thm:main_thm}. 
    Let $\lambda_\ast$ be such that $\mu^2_2(\lambda)\rightarrow 0$ as $\lambda\rightarrow\lambda_\ast$, and define $\mu_2^2(\lambda_\ast):=0$.
    Assume that 
    $h_{11}(\lambda_\ast)> 0$.
    If we define $\mu^2_1(\lambda_\ast) := 1/h_{11}(\lambda_\ast)$, then 
     $\mu_1$ is analytic in $\lambda_\ast$. 
\end{lemma}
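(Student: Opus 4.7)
My plan is to apply the analytic implicit function theorem, but with a reparametrization: the Jacobian of the reduced system \eqref{eq:reduced_syst_mat_form} with respect to $(\mu_1,\mu_2)$ is singular at the limit point, as the excerpt already notes, so I would instead work in the variables $(\mu_1,w)$ with $w:=\mu_2^3$. Since $\mu_2$ enters the reduced system only through the cube $\mu_2^3$, the resulting system is jointly analytic in $(\lambda,\mu_1,w)$, and its Jacobian in these variables need not degenerate.

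First I would pin down $\mu_1^2(\lambda_\ast)$ and derive a compatibility identity by taking $\lambda\to\lambda_\ast$ in \eqref{eq:reduced_syst_mat_form}. Because $\mu_2\to 0$, the normalization \eqref{eq:norm_equations_mat_form} reduces in the limit to $g_{11}(\lambda_\ast)\mu_1(\lambda_\ast)^6=1$, which already forces $\mu_1(\lambda_\ast)\neq 0$. The second equation $\mu_1-h_{11}\mu_1^3=h_{12}\mu_2^3$ then collapses to $\mu_1(\lambda_\ast)(1-h_{11}(\lambda_\ast)\mu_1(\lambda_\ast)^2)=0$, so the hypothesis $h_{11}(\lambda_\ast)>0$ delivers $\mu_1^2(\lambda_\ast)=1/h_{11}(\lambda_\ast)$, matching the definition in the lemma statement. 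Combining both limits yields the compatibility identity $g_{11}(\lambda_\ast)=h_{11}(\lambda_\ast)^3$, which is a consequence of the setup that I will use below.

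I would then apply the analytic implicit function theorem to
\begin{align*}
F_1(\lambda,\mu_1,w) &:= g_{11}(\lambda)\mu_1^6 + 2g_{12}(\lambda)\mu_1^3 w + g_{22}(\lambda)w^2 - 1,\\
F_2(\lambda,\mu_1,w) &:= h_{11}(\lambda)\mu_1^3 + h_{12}(\lambda)w - \mu_1,
\end{align*}
at the point $(\lambda_\ast,\mu_{1,0},0)$ with $\mu_{1,0}^2=1/h_{11}(\lambda_\ast)$, at which both equations are satisfied by the analysis above. A direct computation of the Jacobian of $(F_1,F_2)$ with respect to $(\mu_1,w)$ at this point, simplified using $g_{11}(\lambda_\ast)=h_{11}(\lambda_\ast)^3$, yields a determinant equal to $(2\mu_{1,0}/h_{11}(\lambda_\ast))(3h_{11}(\lambda_\ast)^2 h_{12}(\lambda_\ast) - 2g_{12}(\lambda_\ast))$. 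Provided this is nonzero, the implicit function theorem produces analytic functions $\mu_1(\lambda), w(\lambda)$ near $\lambda_\ast$, and in particular $\mu_1$ is analytic at $\lambda_\ast$, which is the claim. Note that the ambiguity in recovering $\mu_2$ from $w$ (the cube root is multi-valued) does not affect the conclusion, since only $\mu_1$ is asserted to be analytic.

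The main obstacle I anticipate is justifying the nondegeneracy $3h_{11}(\lambda_\ast)^2 h_{12}(\lambda_\ast)\neq 2g_{12}(\lambda_\ast)$ from the bare hypothesis $h_{11}(\lambda_\ast)>0$, since this expression is not manifestly nonzero. Either an additional genericity condition is implicit, or the nonvanishing must be extracted from structural features of $G(\lambda)$ and $H(\lambda)$ inherited from the definitions \eqref{eq:G_def}--\eqref{eq:H_def} that I have not yet exploited. This is the step I would scrutinize most carefully in the authors' proof.
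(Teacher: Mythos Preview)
Your route through the analytic implicit function theorem in the reparametrized variables $(\mu_1,w)$ with $w=\mu_2^3$ is sound in spirit but genuinely different from the paper's argument. The paper does not reparametrize or invoke the implicit function theorem at $\lambda_\ast$; instead it uses that $\mu_1$ is already analytic on a punctured neighborhood of $\lambda_\ast$ (from Theorem~\ref{thm:main_thm}), takes the limit in \eqref{eq:mu_2_pol_two_nonlins} to obtain $\mu_1^2(\lambda)\to 1/h_{11}(\lambda_\ast)$, and then appeals to Riemann's removable singularity theorem to fill in the point. Along the way the paper also differentiates \eqref{eq:mu_2_pol_two_nonlins} and records an explicit value for $\mu_1'(\lambda_\ast)$.

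The main payoff of the paper's approach is that it sidesteps precisely the nondegeneracy condition $3h_{11}^2h_{12}\neq 2g_{12}$ that you flag: Riemann's theorem needs only boundedness of $\mu_1$ near $\lambda_\ast$, which already follows from $\mu_1^2\to 1/h_{11}(\lambda_\ast)>0$, and no Jacobian has to be inverted. Your concern is therefore legitimate for your own argument---that determinant is not forced to be nonzero by the hypothesis $h_{11}(\lambda_\ast)>0$, and it does not drop out of the structural identities between $G$ and $H$---so as written your proof establishes the claim only under an extra genericity assumption (the subsequent Proposition~\ref{thm:singpoints2_exp} does assume $h_{12}(\lambda_\ast)\neq 0$, but that still does not yield your condition). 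On the upside, your approach simultaneously produces analyticity of $w(\lambda)=\mu_2^3(\lambda)$ at $\lambda_\ast$, which is essentially what is exploited in the proof of Proposition~\ref{thm:singpoints2_exp}.
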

\begin{proof}
    Taking $\lambda\rightarrow\lambda_\ast$ in \eqref{eq:mu_2_pol_two_nonlins} gives $\mu^2_1(\lambda_\ast) \rightarrow 1/h_{11}(\lambda_\ast)$, since we cannot have $\mu_1^2\rightarrow0$ and $\mu_2^2\rightarrow0$ simultaneously in \eqref{eq:first_eq_deg_two_proof}.
    Differentiating \eqref{eq:mu_2_pol_two_nonlins} with respect to $\lambda$, taking $\lambda\rightarrow\lambda_\ast$, and inserting the above values for $\mu_1^2(\lambda_\ast)$ and $\mu_2(\lambda_\ast)$, yields
    \begin{equation}
        \mu'_1(\lambda_\ast) = -\frac{h_{11}'(\lambda_\ast)}{h_{11}^{3/2}(\lambda_\ast)}.
    \end{equation}
    This implies that  $\mu_1$ is complex differentiable in $\lambda_\ast$, and it is therefore also analytic in $\lambda_\ast$, due to Riemann's removable singularity theorem \cite[Chapter 4, Theorem~7]{Ahlfors1978}.
\end{proof}
\begin{proposition}\label{thm:singpoints2_exp}
    Let $\mu_1^2(\lambda)$ and $\mu_2^2(\lambda)$ be as in 
    Lemma~\ref{lemma:mu1_analytic}.
    Assume that $h_{12}(\lambda_\ast)\neq0$.
    Then, we have
    \begin{equation}\label{eq:singpoints2_exp_formula}
    \mu_2^2(\lambda) = \mathcal{O}(|\lambda-\lambda_\ast|^{\frac{2}{3}}).
    \end{equation}
\end{proposition}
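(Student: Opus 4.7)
The plan is to exploit the explicit polynomial relation \eqref{eq:mu_2_pol_two_nonlins} between $\mu_1$ and $\mu_2$ together with the analyticity of $\mu_1$ at $\lambda_\ast$ established by Lemma~\ref{lemma:mu1_analytic}. Solving \eqref{eq:mu_2_pol_two_nonlins} for $\mu_2^3$ and factoring yields
\begin{equation}
\mu_2^3(\lambda) \;=\; \frac{\mu_1(\lambda)\bigl(1 - h_{11}(\lambda)\mu_1^2(\lambda)\bigr)}{h_{12}(\lambda)},
\end{equation}
so the main task is to estimate the numerator factor $1 - h_{11}(\lambda)\mu_1^2(\lambda)$ as $\lambda\to\lambda_\ast$.

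First I would observe that $h_{11}(\lambda)$ and $h_{12}(\lambda)$ are rational functions of $\lambda$ that are regular at $\lambda_\ast$ (since $\lambda_\ast$ is not an eigenvalue of the pencil $A_0-\lambda E$), and by Lemma~\ref{lemma:mu1_analytic} the function $\mu_1(\lambda)$ is analytic at $\lambda_\ast$ with $\mu_1^2(\lambda_\ast)=1/h_{11}(\lambda_\ast)$. Consequently, $\mu_1^2$ is analytic at $\lambda_\ast$, and the combination $1 - h_{11}(\lambda)\mu_1^2(\lambda)$ is an analytic function of $\lambda$ that vanishes at $\lambda_\ast$. By a first-order Taylor expansion around $\lambda_\ast$, this gives
\begin{equation}
1 - h_{11}(\lambda)\mu_1^2(\lambda) = \mathcal{O}(|\lambda-\lambda_\ast|).
\end{equation}

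Next I would use the remaining factors. Since $\mu_1$ is analytic at $\lambda_\ast$, it is bounded in a neighborhood of $\lambda_\ast$. Since $h_{12}(\lambda_\ast)\neq 0$ by assumption and $h_{12}$ is continuous, $1/h_{12}(\lambda)$ is likewise bounded on a neighborhood of $\lambda_\ast$. Combining these bounds with the above estimate gives
\begin{equation}
\mu_2^3(\lambda) = \mathcal{O}(|\lambda-\lambda_\ast|),
\end{equation}
from which taking absolute values and raising to the power $2/3$ immediately yields $\mu_2^2(\lambda)=\mathcal{O}(|\lambda-\lambda_\ast|^{2/3})$, as claimed.

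The main conceptual obstacle is strictly the one already resolved in Lemma~\ref{lemma:mu1_analytic}: once one knows that $\mu_1$ extends to an analytic function through $\lambda_\ast$ and that $\mu_1^2(\lambda_\ast)=1/h_{11}(\lambda_\ast)$, the remainder of the argument is a short bookkeeping of orders. The assumption $h_{12}(\lambda_\ast)\neq 0$ is exactly what is needed to prevent the denominator from artificially inflating the order of vanishing, and the assumption $h_{11}(\lambda_\ast)>0$ ensures the factor $1-h_{11}\mu_1^2$ does not degenerate before the expansion is carried out.
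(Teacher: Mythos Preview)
Your proof is correct and follows essentially the same route as the paper: both use the relation \eqref{eq:mu_2_pol_two_nonlins}, the analyticity of $\mu_1$ from Lemma~\ref{lemma:mu1_analytic}, and the identity $\mu_1^2(\lambda_\ast)=1/h_{11}(\lambda_\ast)$ to conclude that $\mu_2^3(\lambda)=\mathcal{O}(|\lambda-\lambda_\ast|)$. Your presentation is in fact slightly cleaner than the paper's, since you factor $\mu_2^3=\mu_1(1-h_{11}\mu_1^2)/h_{12}$ and observe directly that the middle factor is analytic and vanishes at $\lambda_\ast$, whereas the paper Taylor-expands $\mu_1/h_{12}$ and $h_{11}/h_{12}$ separately and then cancels the zeroth-order terms by hand.
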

\begin{proof}
    Since $\mu_1$ is analytic we have a Taylor expansion and we can use the following expansions,
    \begin{eqnarray}
        \frac{\mu_1(\lambda)}{h_{12}(\lambda)} &=& \frac{\mu_1(\lambda_\ast)}{h_{12}(\lambda_\ast)} + K_1(\lambda-\lambda_\ast) + \mathcal{O}\left((\lambda-\lambda_\ast)^2\right),\\
        \frac{h_{11}(\lambda)}{h_{12}(\lambda)} &=& \frac{h_{11}(\lambda_\ast)}{h_{12}(\lambda_\ast)} + K_2(\lambda-\lambda_\ast) + \mathcal{O}\left((\lambda-\lambda_\ast)^2\right),
    \end{eqnarray}
    where $K_1,K_2$ are the expansion coefficients for the first-order term.
    Inserting these expansions into \eqref{eq:mu_2_pol_two_nonlins}, while neglecting higher-order terms, yields
    \begin{equation}
        \mu_2^3 = \frac{\mu_1(\lambda_\ast)}{h_{12}(\lambda_\ast)} + K_1(\lambda-\lambda_\ast) - 
        \left(\frac{h_{11}(\lambda_\ast)}{h_{12}(\lambda_\ast)} + K_2(\lambda-\lambda_\ast)\right)\left(\mu_1(\lambda_\ast) + \mu_1^{\prime}(\lambda_\ast)(\lambda-\lambda_\ast)\right)^3.
    \end{equation}
    Expanding the cube, distributing terms, setting $\mu^2_1(\lambda_\ast) = 1/h_{11}(\lambda_\ast)$, and again neglecting 
    higher-order terms, gives 
    \begin{equation}
        \mu_2^3 = (K_1-K_2\mu_1(\lambda_\ast))(\lambda-\lambda_\ast).
    \end{equation}
    From this we conclude \eqref{eq:singpoints2_exp_formula}.
\end{proof}

\subsection{More nonlinear terms ($m>2$)}\label{sec:more_nonlins}
For the case $m=2$, we showed in the previous section that the polynomial system could be solved analytically. 
We will show how the solutions to the polynomial system \eqref{eq:reduced_syst_mat_form} for $m>2$ can be obtained by means of a change of variables, and a reformulation as a multiparameter eigenvalue problem (MEP).

Consider the change of variables $w:=\mu^3$.
Then the solutions of the polynomial system \eqref{eq:reduced_syst_mat_form} are contained in those of a new system 
\begin{subequations}\label{eq:w_poly_system}
\begin{eqnarray}
    & &\trans{w}G(\lambda)w-1 = 0,\label{eq:first_w_eq}\\
    & &P\left((H(\lambda)w)^3-w\right) = 0\label{eq:second_w_eq},
\end{eqnarray}
\end{subequations}
obtained by cubing both sides of \eqref{eq:P_eqns}.
The solutions to \eqref{eq:w_poly_system} can in turn be extracted from the solutions of an associated MEP constructed in the following way.
We will use a companion linearization, a technique very common for polynomial eigenvalue problems, see, e.g., \cite{Mackey:2006:VECT}, but also polynomial multiparameter eigenvalue problems \cite{Muhic2010}.
Notice that \eqref{eq:first_w_eq} is equivalent to the system of equations
\begin{equation}
    \begin{bmatrix}
        -1 & \trans{w}\trans{G}\\
        w & -I_{m\times m}
    \end{bmatrix}\begin{bmatrix}
        1 \\ w
    \end{bmatrix} = 0,
    \label{eq:normalization_mep_eq}
\end{equation}
where $w$ appears only linearly in the coefficient matrix.
Furthermore, denoting the $k$th row of $H(\lambda)$ by $\trans{h_k}\in\mathbb{R}^{1\times m}$, the $k$th row of \eqref{eq:second_w_eq} is equivalent to the system of equations
\begin{equation}
    \begin{bmatrix}
        -w_k & 0 & \trans{h_k}w\\
        \trans{h_k}w & -1 & 0 \\
        0 & \trans{h_k}w & -1
    \end{bmatrix}\begin{bmatrix}
        1\\
        \trans{h_k}w \\
        (\trans{h_k}w)^2
    \end{bmatrix} = 0,
\label{eq:non_normalization_mep_eq}
\end{equation}
where $k=1,\dots,m-1$, assuming that $P$ discards the last equation in \eqref{eq:P_eqns}.
Together, \eqref{eq:normalization_mep_eq} and \eqref{eq:non_normalization_mep_eq} form $m$ equations, where the $m$ unknowns $w=[w_1, \dots, w_m]$ appear only linearly in the coefficient matrices.
This means that 
\eqref{eq:w_poly_system}
can be reformulated to the form of a general MEP, i.e., a problem of the form 
\begin{subequations}\label{eq:general_mep}
\begin{align}
    (A_{1,0} + w_1A_{1,1} +& \dots + w_mA_{1,m})x_0 = 0, \\
    (A_{2,0} + w_1A_{2,1} +& \dots + w_mA_{2,m})x_1 = 0, \\
    &\vdots \notag \\
    (A_{m,0} + w_1A_{m,1} +& \dots + w_mA_{m,m})x_{m-1} = 0,
\end{align}
\end{subequations}
where $x_0$ is the vector $\trans{[1, w]}$, $x_k=\trans{[1, \trans{h_k}w, (\trans{h_k}w)^2]}$, for $k=1,\dots,m-1$, and the matrices $A_{1,0}, \ldots, A_{m,m}$ are constructed correspondingly from \eqref{eq:normalization_mep_eq} and \eqref{eq:non_normalization_mep_eq}.
In this way, the solutions to the polynomial system can be obtained by solving the MEP \eqref{eq:general_mep}, and subsequently reversing the change of variables.

\begin{remark}[Singular points]\label{rem:singular}
In Section~\ref{sec:singular_two}, we showed, for $m=2$, that around points where $\mu_2=0$, corresponding to $\lambda=\lambda_*$, 
we have the asymptotic behavior 
$\mu_2^2(\lambda)=O(|\lambda-\lambda_*|^{2/3})$. A consistent conclusion can be drawn from the above construction using MEPs. 
Eigenvalue solutions to MEPs are generically analytic functions of the elements of the coefficient matrices $A_{1,0},\ldots,A_{m,m}$, which in turn are generically analytic functions of $\lambda$. \ejcomment{The generic situation occurs whenever the eigenvalues are simple, which is also the situation that we observed in the simulations.}
Therefore, near a point $w_i=0$, we have
\[
\mu_i^2=w_i^{2/3}=O(|\lambda -\lambda_*|^{2/3}).
\]
\ejcomment{Hence, the order of the branch point singularity is generically low, such that it can be treated with NEP-solvers.}
\end{remark}

\section{Implementation details}\label{sec:implementation}

In order to combine the techniques described in the previous sections into an effective strategy for solving \eqref{eq:full_NEPv_problem}, several implementation aspects need to be developed. In this section and the simulation section (Section~\ref{sec:num_examples}) we provide these details, which are also implemented and publicly available in an online repository\footnote{https://github.com/lithell/nepv-nep.git}. The simulations are implemented in Julia, specifically using \texttt{NEP-PACK} which is a  software package for NEPs \cite{Jarlebring:2018:NEPPACK}.


\subsection{Numerical methods for the polynomial system}\label{sec:num_methods_algebraic_system}

The scalar functions $\mu_1,\ldots,\mu_m$ are implicitly defined as
solutions to algebraic systems, e.g., in the general case \eqref{eq:reduced_syst_mat_form}, or as in Corollary~\ref{cor:main_thm_cor} for $m=2$. To apply the techniques in this article, these systems have to be solved efficiently and reliably. For $m=2$, we have an explicit form (Corollary~\ref{cor:main_thm_cor}), although this seemingly cannot be naturally generalized to $m>2$. 
Computationally solving algebraic systems of equations is the main topic of the field of computational algebraic geometry. 
The Julia package \texttt{HomotopyContinuation.jl} \cite{HomotopyContinuation.jl}
is one powerful tool that has emerged from this field.
We performed simulations with this tool, but an alternative approach based on the linearization, as described in Section~\ref{sec:more_nonlins}, turned out to be preferable in practice. 

In Section~\ref{sec:more_nonlins}, we illustrated how the algebraic system could be linearized (in the sense of companion linearization) to a MEP.
This opens up the possibility to use methods and software for MEPs, see e.g., \cite{Plestenjak:2016:NUMERICAL} and the references therein, as well as the Matlab software package \texttt{MultiParEig} \cite{multipareig}.
Since we needed functionality from the Julia ecosystem 
we could not directly use \texttt{MultiParEig}, but instead implemented a subset of the functionality for our setting. A fundamental technique for MEPs is the construction of
the generalized eigenvalue problems (GEPs)
\begin{equation}
      w_i \Delta_0 x  =  \Delta_i x \label{eq:delta_eq},\quad i=1,\dots,m,
\end{equation}
where $\Delta_0,\ldots,\Delta_m$ are matrices constructed from Kronecker products, derived from an operator determinant associated with the MEP \eqref{eq:general_mep}.
\ejcomment{In our context, we solve \eqref{eq:normalization_mep_eq} and \eqref{eq:non_normalization_mep_eq}, and therefore the $\Delta$-matrices are of dimension 
$(3^{m-1}(m+1)) \times (3^{m-1}(m+1))$, which unfortunately grows exponentially with the number of nonlinear terms, but is fortunately independent of the system size $n$.}
The eigenvalues of \eqref{eq:general_mep} can in turn be extracted from those of the GEPs \eqref{eq:delta_eq}.
Formally, \eqref{eq:delta_eq} is a family of generalized eigenvalue problems, with a shared eigenvector.
In practice, the consideration of only one of these generalized eigenvalue problems is often sufficient.
This yields one set of eigenvalues of the MEP, and the remaining eigenvalues can be computed, e.g., using Rayleigh quotients.
In our specific case, we chose to solve the GEP associated with the $\Delta_i$-matrix having the smallest condition number.
Again similar to \texttt{MultiParEig}, as a post-processing procedure, we carried out Newton refinement to improve the quality of the solutions.

\begin{remark}[Avoiding recomputation of $\mu_1,\ldots,\mu_m$]
Note that the evaluating $\mu_1,\ldots,\mu_m$ requires substantial computational effort, 
stemming from both the evaluation of the coefficients $G(\lambda)$ and $H(\lambda)$ as well as the solution to the MEP \eqref{eq:general_mep}. 
In the implementation in the context of a NEP-solver, e.g., as described in Section~\ref{sec:num_alg_for_neps}, we 
may need to repeatedly evaluate $\mu_1,\ldots,\mu_m$. In order to avoid recomputation, we use a look-up table to store already computed $\mu$-values. 
\end{remark}

\subsection{Adaption of iterative methods for NEPs}\label{sec:num_alg_for_neps}
The literature and results regarding numerical methods for NEPs of the general form 
\begin{equation}
    M(\lambda)v=0,
    \label{eq:general_nep_impldetails}
\end{equation}
where $M(\lambda)\in\CC^{n\times n}$, is extensive, see, e.g., \cite{Mehrmann:2004:NLEVP}, or the summary \cite{Guttel2017} for a more recent review of the field.
Consequently, there are many efficient methods for different use-cases.
We now motivate our choice of method, which we will subsequently discuss in more detail.

A large number of methods depend on the function $M(\lambda)$ being holomorphic or meromorphic in some domain $\Omega$ of the complex plane.
This class consists of, among other methods, contour-integral based methods \cite{Beyn:2011:INTEGRAL}\cite{Asakura:2009:NUMERICAL}, Krylov methods based on series expansion of $M(\lambda)$ (so-called infinite Arnoldi methods) \cite{Jarlebring:2012:INFARNOLDI}\cite{Jarlebring:2017:TIAR}\cite{Gaaf:2017:INFBILANCZOS}, and approaches based on rational approximation \cite{Guttel2017}\cite{Effenberger2012a}.
Crucially, for meromorphic problems, these methods depend on knowing the location of the singularities a priori.
In our application, this is not the case, and determining the locations of the singularities of $\mu^2_1(\lambda), \dots, \mu^2_m(\lambda)$ numerically is expensive.
Hence, we have not chosen to consider any of these methods for use in our application.

There are also a number of adaptions of Newton's method to nonlinear eigenvalue problems,  see, for instance, \cite{Guttel2017,Mehrmann:2004:NLEVP} or \cite{Ruhe:1973:NLEVP}. Although convergence of
Newton's method can only be guaranteed locally, Armijo steplength control (which is used in our setting) can be adapted to improve the convergence basin \cite{Kressner:2009:BLOCKNEWTON}.
This method class consists of locally convergent iterative schemes, whose main computational cost is the need to repeatedly solve an iteration-dependent linear system, involving the Jacobian matrix of the extended system 
\begin{equation}
    \begin{bmatrix} 
        M(\lambda)v\\
        \herm{c}v - 1
    \end{bmatrix} = 0,
    \label{eq:aug_system_newton_nep}
\end{equation}
for some vector $c\in\CC^n$ not orthogonal to the eigenvector $v$.
A common approach to reduce this computational cost is to consider so-called quasi-Newton methods.
In these methods, (parts of) the Jacobian matrix are kept constant throughout the iterations, meaning precomputed factorizations can be used to speed up the solution of the linear system.
In \cite{Jarlebring:2018:DISGUISED}, several choices of approximate Jacobians are considered.
Generally, the authors conclude that we can only expect local linear convergence to simple eigenvalues, where the convergence factor is proportional to the distance from the eigenvalue to some fixed shift parameter, $\sigma\in\CC$.
In our setting, the value in terms of computation time, of using a (partially) fixed Jacobian is less substantial due to the fact that every evaluation of $\mu_1^2, \dots, \mu_m^2$ requires the solution of $m$ (iteration-dependent) linear systems.
Instead, we have opted to use a locally quadratically convergent Newton scheme, specifically the augmented Newton method; see, e.g.,  \cite{Ruhe:1973:NLEVP}.
To allow the possibility of computing several eigenvalues, we will combine this with a deflation method to remove already computed eigenvalues from the solution set.
The deflation procedure will be described in Section~\ref{sec:deflation}.

Newton's method generates a sequence of eigenvalue and eigenvector approximations $(\lambda_k,v_k)$, $k=1,2,3,\dots$, and proceeds by solving the system 
\begin{equation}
    \begin{bmatrix}
        M(\lambda_k) & M'(\lambda_k)v_k \\
        \herm{c} & 0 
    \end{bmatrix} \begin{bmatrix}
        v_{k+1} - v_k \\
        \lambda_{k+1} - \lambda_k
    \end{bmatrix} = -\begin{bmatrix}
        M(\lambda_k)v_k \\
        \herm{c}v_k - 1
    \end{bmatrix}.
    \label{eq:augnewton_linsys}
\end{equation}
The augmented Newton method is obtained by equivalently writing these relations as the steps
\begin{eqnarray}
    & & M(\lambda_k)u_{k+1} = M'(\lambda_k)v_k,\label{eq:linsys_augnewton_final} \\
    & & \lambda_{k+1} = \lambda_k + \frac{1}{\herm{c}u_{k+1}},\\
    & & v_{k+1} = \frac{1}{\herm{c}u_{k+1}}u_{k+1}.
\end{eqnarray}
In this way, the augmented Newton method operates on vectors of length $n$, but is equivalent to Newton's method in exact arithmetic.

In the setting of our numerical simulations, we consider a discretization of a differential equation leading to $n\gg 1$, such that $A_0$ is a large and sparse matrix. Moreover, we assume that $m\ll n$, such that $M(\lambda_k)\in\RR^{n\times n}$ has a 
low-rank structure originating from the nonlinear terms in \eqref{eq:NEP}.
To avoid explicitly forming the rank-one matrices (which are dense matrices) involved in evaluating $M(\lambda_k)$ in \eqref{eq:linsys_augnewton_final}, we will employ the Sherman-Morrison-Woodbury (SMW) formula, see, e.g., \cite[Section 2.1.3]{Golub:2007:MATRIX}.
The SMW formula states that the inverse of the rank-$m$ update to $A_0$ required in solving \eqref{eq:linsys_augnewton_final}, can be computed by the identity
\begin{equation}
    (A_0 + U_k\trans{U_k})^{-1} = A_0^{-1}-A_0^{-1}U_k(I_{m\times m}+\trans{U_k}A_0^{-1}U_k)^{-1}\trans{U_k}A_0^{-1},
    \label{eq:smw}
\end{equation}
where we have defined $U_k=[\mu_1(\lambda_k)a_1, \dots, \mu_m(\lambda_k)a_m]\in\RR^{n\times m}$.
Notice that this means that the computation of $u_{k+1}$ in \eqref{eq:linsys_augnewton_final} can be performed in $1+m$ sparse $n\times n$ linear solves, instead of one dense linear solve.
In practice, using \eqref{eq:smw} is much more efficient for $m\ll n$, both with respect to computation time and memory limitations.
All reported results from the numerical simulations in Section~\ref{sec:num_examples} utilize \eqref{eq:smw} for the solution of linear systems involving low-rank terms. 

\subsection{Deflation}\label{sec:deflation}
In the numerical simulations in Section~\ref{sec:num_examples} we will employ a deflation technique developed in \cite{Effenberger2013}, based on the concept of invariant pairs for NEPs introduced in \cite{Kressner:2009:BLOCKNEWTON}. For simplicity, we consider the index-one case. 
The background necessary for constructing the deflation procedure is now briefly summarized, where we still consider the general setting of \eqref{eq:general_nep_impldetails}.
The deflation procedure defines an extended NEP one dimension larger.
The eigenvalues of the extended NEP are eigenvalues of the original NEP. The converse holds for all eigenvalues except for the deflated eigenvalue, which can be viewed as removed from the solution set.
For the definition of the extended problem, we will require the concept of minimal invariant pairs. 
Without loss of generality, we can assume that $M(\lambda)$ in \eqref{eq:general_nep_impldetails} has the structure $M(\lambda) = M_1f_1(\lambda) + \dots + M_mf_m(\lambda)$, where $M_1, \dots, M_m$ are matrices, and $f_1, \dots, f_m$ are scalar functions of the eigenvalue.
An invariant pair of \eqref{eq:general_nep_impldetails} is then defined to be a pair $(X,S)\in \RR^{n\times p}\times\RR^{p\times p}$ such that 
\begin{equation}
    M_1Xf_1(S) + \dots + M_mXf_m(S) = 0,
\end{equation}
where $f_i(S)$, $i=1,\dots,m$, are understood to be matrix functions.
In our case, $f_i = \mu_i^2$, $i=1,\dots,m$, which are generically analytic. Hence, the matrix functions $\mu_i^2(S)$ are well-defined.
The columns of $X$ form a basis for an invariant subspace of \eqref{eq:general_nep_impldetails}, and by means of a Schur decomposition of $S$, one can show that $S$ shares its eigenvalues with \eqref{eq:general_nep_impldetails}.
In \cite{Effenberger2013}, invariant pairs are used to construct a deflation technique, by expanding a previously computed invariant pair one column at a time.
We compute vectors $v,u$ and a scalar $\lambda$ such that the pair $(\widehat{X}, \widehat{S})$ defined by 
\begin{equation}
    (\widehat{X}, \widehat{S}) = \left(
        \begin{bmatrix}
            X & v
        \end{bmatrix}, 
        \begin{bmatrix}
            S & u \\
            0 & \lambda
        \end{bmatrix}
    \right),
    \label{eq:extended_minimal_pair}
\end{equation}
is also minimal, which is ensured by an orthogonality condition.
This orthogonality condition means we avoid reconvergence to an already computed eigenvalue.
If $(X,S)$ is a minimal invariant pair of \eqref{eq:general_nep_impldetails}, it is possible to show that all extended minimal pairs of the form \eqref{eq:extended_minimal_pair} are equivalent by similarity transform to such pairs with $u,v,\lambda$ defined as the solutions to the extended NEP
\begin{equation}
    \begin{bmatrix}
        M(\lambda) & U(\lambda) \\
        \herm{X}  & 0
    \end{bmatrix}\begin{bmatrix}
        v \\ u
    \end{bmatrix} = 0,
    \label{eq:extended_nep_deflation}
\end{equation}
with $\norm{v} + \norm{u} \neq 0$, and $U(\lambda)$ defined by
\begin{equation}
    U(\lambda) = M(\lambda)X(\lambda I -S)^{-1}.
\end{equation}
In this way, additional eigenvalues of the original problem \eqref{eq:general_nep_impldetails} can be extracted from those of \eqref{eq:extended_nep_deflation}, while avoiding reconvergence.
The foundational functionality of the above deflation technique is available in \texttt{NEP-PACK}.

\section{Numerical example}\label{sec:num_examples}
\subsection{Discretization of a PDE eigenvalue problem}
In order to illustrate the performance and properties
of the approach, we now study a 
GPE-inspired nonlinear eigenvalue problem stemming from the discretization of the following problem,
similar to the problem given in \cite[Section~2.3]{Henning:2025:REVIEW}.
Consider the two-dimensional nonlinear eigenvalue problem of finding $(u, \lambda)\in L^2(\Omega)\times\RR$ such that
\begin{equation}
    -\Delta u(x,y) + p(x,y)u(x,y) + \sum_{i=1}^m\phi_i^3(u)\psi_i(x,y)= \lambda u(x,y),
    \label{eq:cont_problem_numexp}
\end{equation}
with homogeneous Dirichlet boundary conditions and $m=5$.
Normalization of the solution is enforced with respect to the $L^2$-norm, i.e., we require $\norm{u}_{L^2} = 1$.
We consider the square domain $\Omega=[-1,1]\times[-1,1]$, and define the functions $\phi_i$ and $\psi_i$, $i=1,\dots,5$, by
\begin{equation}
    \phi_i(u)= \int_\Omega \psi_i(x,y)u(x,y)\,d\Omega, \quad \psi_i(x,y)=c_ie^{-\sigma_i [(x-x_i)^2 + (y-y_i)^2]},
\end{equation}
where we have selected the parameters to be $c_i=45$, $\sigma_i=6$, for $i=1,\dots,5$, and the centers of the Gaussians $\psi_i$ were selected as $(x_1,y_1) = (0.4,-0.6)$, $(x_2,y_2) = (0.6,0.3)$, $(x_3,y_3) = (0.1,0.6)$, $(x_4,y_4) = (-0.5,0.4)$, $(x_5,y_5) = (-0.4,-0.4)$.
The positions of the centers is essentially arbitrary, but were chosen in an approximately circular configuration in order to illustrate problem properties.
The potential function $p(x,y)$ is chosen to model a superposition of a harmonic trapping potential and an optical lattice, common in experiment \cite{Henning:2025:REVIEW}, although the parameters are chosen differently for illustration purposes.
More precisely, we take the potential to be $p(x,y) = p_\text{h}(x,y) + p_{\text{opt}}(x,y),$ where the harmonic oscillator potential $p_\text{h}$ is given by 
$p_\text{h}(x,y) := 16\left(\gamma_x^2x^2 + \gamma_y^2y^2\right)$,
with $\gamma_x=1, \gamma_y=2$, and the optical lattice potential $p_\text{opt}$ is 
$p_\text{opt}(x,y) := 64\left(\sin(4\pi x)^2 + \sin(4\pi y)^2\right)$.
With a uniform mesh, $\Delta x = \Delta y =: h$, and $(N+2)^2$ total points, we define, for the $N^2$ interior points, the discretized solution vector $v \approx [u(x_1, y_1), \dots, u(x_{N}, y_1),\dots, u(x_1, y_{N}), \dots, u(x_{N}, y_{N})]^T$ 
We set $L_{N^2}\in\RR^{N^2\times N^2}$ to be the standard two-dimensional discrete Laplacian stencil of size $N^2$, i.e.,
\[
    L_{N^2} = D_{2,N}\otimes I +  I\otimes D_{2,N},
\]
with $D_{2,N} = \frac{1}{h^2}\text{tridiag}(1, -2, 1)\in\RR^{N\times N}$, and we let $V$ be the diagonal matrix 
$V = \diag(p(x_1, y_1), \dots p(x_N,y_1), \dots, p(x_1, y_N), \dots, p(x_N,y_N))$.
The integrals are approximated with the two-dimensional trapezoidal rule.
We arrive at the discrete problem of finding $(v, \lambda)\in\RR^{N^2}\times\RR$ such that
\begin{subequations}\label{eq:discrete_numexp}
\begin{eqnarray}
    & & \left(h^2(-L_{N^2} + V) + \sum_{i=1}^m(\trans{a_i}v)^2a_i\trans{a_i}\right)v = \lambda E v, \\
    & & \norm{v}_B^2 = 1,
\end{eqnarray}
\end{subequations}
with $a_i = h^2\trans{[\psi_i(x_1,y_1), \dots, \psi_i(x_N, y_1), \dots, \psi_i(x_1,y_N), \dots, \psi_i(x_N, y_N)]}$, and $E = B = h^2I$.

\begin{figure}[t]
    \centering
    \includegraphics{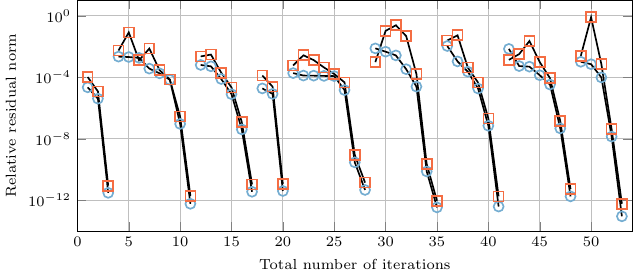}
    \caption{Convergence history for the problems \eqref{eq:discrete_numexp} and \eqref{eq:discrete_numexp_NEP}. 
    The relative residual norm of the NEP \eqref{eq:relres1} is plotted with blue circles $($\textcolor{myblue}{$\boldsymbol{\circ}$}$)$, and the corresponding relative residual norm of the NEPv \eqref{eq:relres2} is indicated by the red squares $($\textcolor{myred}{$\boldsymbol{\square}$}$)$.
    The stopping tolerance in the NEP solver was selected to be $5\cdot10^{-12}$ with respect to the relative residual norm.
    The eigenvalues were computed in the order indicated in Figure~\ref{fig:numexp}.
    }
    \label{fig:numexp_conv_hist}
\end{figure}

\subsection{Simulation results}
We apply our approach to the NEPv \eqref{eq:discrete_numexp}, leading to the NEP  given by
\begin{equation}\label{eq:discrete_numexp_NEP}
    \left(h^2(-L_{N^2} + V) + \sum_{i=1}^5\mu_i^2(\lambda)a_i\trans{a_i}\right)v = \lambda E v,
\end{equation}
where $\mu_1^2(\lambda), \dots, \mu_5^2(\lambda)$ are defined as in Section~\ref{sec:transformation}.
For the simulation we selected $N=256$, i.e., the problems \eqref{eq:discrete_numexp} and \eqref{eq:discrete_numexp_NEP} are of dimension $n=256^2 = 65536$.
The problem \eqref{eq:discrete_numexp_NEP} is solved by the augmented Newton method for nonlinear eigenvalue problems from Section~\ref{sec:num_alg_for_neps}.
To compute several eigenvalues of \eqref{eq:discrete_numexp_NEP}, we deflate already computed eigenvalues as described in Section~\ref{sec:deflation}.
The augmented Newton method requires the derivative of \eqref{eq:discrete_numexp_NEP} with respect to $\lambda$. 
Since the derivatives of $\mu_1^2, \dots, \mu_5^2$ are not directly available, these are approximated by standard second-order finite differences, which do introduce additional approximation error, but seemingly do not substantially influence the performance.
We selected the stopping tolerance in the NEP solver to be $5\cdot10^{-12}$, with respect to the relative residual norm of \eqref{eq:discrete_numexp_NEP}, i.e., the quality of the approximation of the pair $(\tilde{\lambda},\tilde{v})$ is quantified by
\begin{equation}\label{eq:relres1}
  \frac{\|M(\tilde{\lambda})\tilde{v} \|}{\|\tilde{v}\|}.
\end{equation}
For illustration we also studied the relative residual for the NEPv, i.e.,
\begin{equation}\label{eq:relres2}
    \frac{\|A(\tilde{v})\tilde{v}-\tilde{\lambda}\tilde{v} \|}{\|\tilde{v}\|}.
\end{equation}
All simulations were performed on a system with a 3.9GHz 8-core Intel i5-8265U processor and 8GB of memory, running Debian GNU/Linux 12.
The results of the simulation are presented in Figure~\ref{fig:numexp_conv_hist} and \ref{fig:numexp}.
Performance metrics from the simulation can be found in Table~\ref{table:performance_metrics}.
The functions $\mu_1^2,\dots,\mu_5^2$ for this example are visualized in Figure~\ref{fig:numexp_curves}. 

\begin{figure}[h]
    \centering
    \includegraphics{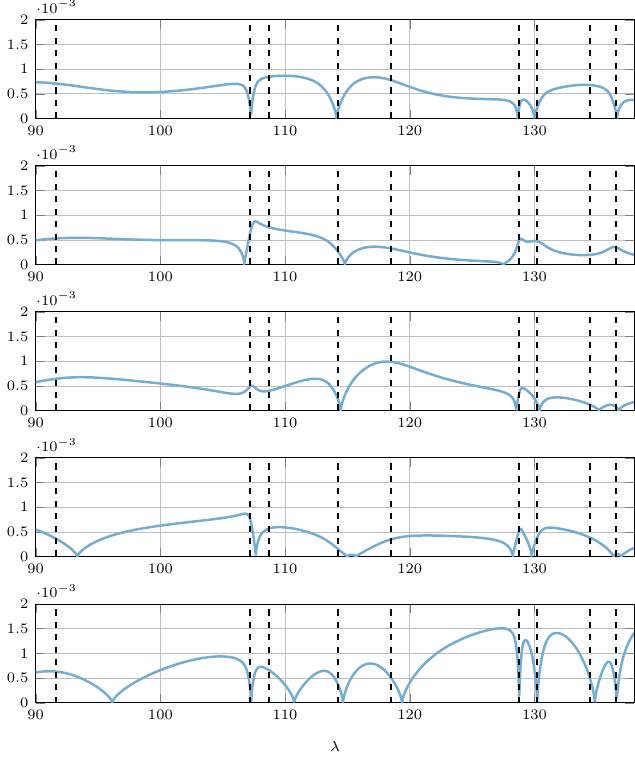}
    \caption{The curves $\mu_1^2,\ldots,\mu_5^2$, ordered top to bottom, for the example \eqref{eq:discrete_numexp_NEP}.
    For the relevant interval, the curves define single-valued functions, and correspondingly, a unique NEP.
    The computed eigenvalues are indicated by the vertical dashed lines.}
    \label{fig:numexp_curves}
\end{figure}

We observed rapid convergence to all nine computed eigenvalues once the solver entered the asymptotic regime.
As expected, the deflation procedure allowed us to robustly compute several eigenvalues of \eqref{eq:discrete_numexp}, even when the eigenvalues were not well separated.
In such a situation, a locally convergent NEPv solver applied directly to \eqref{eq:discrete_numexp} might struggle to distinguish between the eigenvalues unless several different starting guesses are used across multiple runs.
Clearly, instead solving the equivalent NEP, when combined with deflation, allows us to largely circumvent such difficulties.

Studying Figure~\ref{fig:numexp_curves}, we note that many eigenvalues appear close to singularities, which are also zeros of the functions $\mu_1,\ldots,\mu_5$ due to Remark~\ref{rem:singular}.
This behavior is expected and can be explained by studying Figure~\ref{fig:numexp}. 
For the eigenmodes corresponding to these eigenvalues, the Gaussians $\psi_i(x,y)$, corresponding to $\mu_i^2(\lambda)$, are nearly centered on the zero-valued interface between positive and negative parts of the eigenfunctions.
Hence, the $L^2$ inner product between $\psi_i(x,y)$ and the eigenmode is small, by the symmetry of $\psi_i(x,y)$.
Consequently, by the definition of $\mu_1^2,\dots,\mu_5^2$, we also expect these functions to be small for these eigenvalues.
The fact that we robustly found these eigenvalues, despite being close to singularities, can be seen as a favorable aspect of the method.

Another observation from Figure~\ref{fig:numexp_curves} is that this problem seems to have unique $\mu$-values. That is, although the polynomial system that is solved for every $\lambda$ has many complex solutions, there seems to only be one real-valued solution. This means that the potential complication with multi-valued $\mu$-functions is not a concern for this specific application.

Iteration times and a quantification of the number of linear solves are given in Table~\ref{table:performance_metrics}. We observe that the deflation technique is successful in this situation, since we can compute 9 eigenvalues, and the number of iterations per eigenvalue is not substantially increasing.  The CPU time per eigenvalue does increase  slightly. This can be partially explained by having to solve additional systems when using the extended NEP \eqref{eq:extended_nep_deflation}. However, the variation in CPU time is more influenced by the number of Armijo steps required, e.g., 
for eigenvalue $\lambda_2$ and eigenvalue $\lambda_5$ that have the same number of iterations, but differ significantly in terms of the number of linear solves. 

\vpcomment{In order to further illustrate the properties of our proposed method, we also performed simulations with the inverse iteration method ($J$-version) from \cite{Jarlebring:2014:INVIT}, where the aim was to compute the smallest eigenvalue of \eqref{eq:discrete_numexp}.
The $J$-method requires a shift parameter $\sigma$, and we chose this parameter in two different ways, $\sigma=50$ and $\sigma=90$, to 
illustrate two different representative situations. 
We again employed the SMW-formula for the solution of the linear systems, similar to \eqref{eq:smw}.
For the choice $\sigma=50$, we observed convergence to the same eigenvalue, $\lambda\approx91.6324$, as reported for our method. This required approximately 53 iterations and $318$ linear solves. We achieved a relative residual norm of approximately $8.3176\cdot 10^{-11}$.
For $\sigma=90$, we again observed convergence to the same eigenvalue as previously, after 33 iterations and $198$ linear solves, with a corresponding relative residual norm of approximately $6.7638\cdot10^{-11}$.
Clearly, we required a similar number of linear solves, as compared to our method to reach a comparable residual norm, cf. Table~\ref{table:performance_metrics}.
}

\begin{figure}[h]
    \centering
    \includegraphics[width=\textwidth]{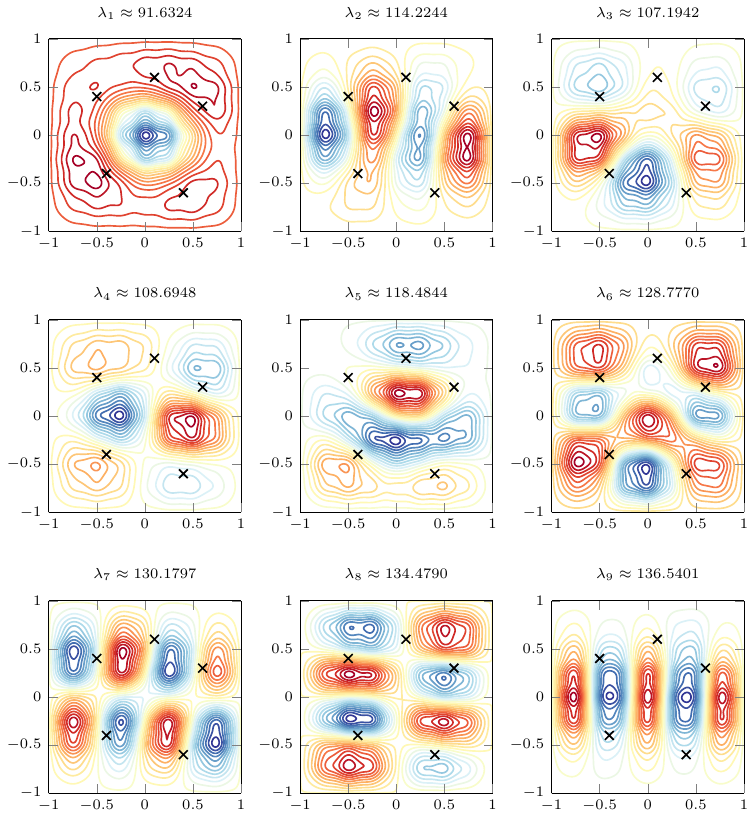}
    \caption{Normalized eigenmodes of the discrete problem \eqref{eq:discrete_numexp}, for $N=256$.
    The black crosses $($\ding{53}$)$ indicate the centers of the Gaussians $\psi_i(x,y)$, $i=1,\dots,5$.
    When read left to right, top to bottom, the eigenmodes are presented in the order in which the eigenvalues were computed using deflation. 
    The eigenvalues have been truncated to four decimal places.}
    \label{fig:numexp}
\end{figure}
\scriptsize
\begin{table}[H]
\centering
\scriptsize
{\renewcommand{\arraystretch}{1.2}
\begin{tabular}{c||c|c|c|c|c|c|c|c|c}
{\renewcommand{\arraystretch}{1.8}
\textbf{\begin{tabular}[c]{@{}c@{}} Eigenvalue \end{tabular}}} & $\boldsymbol{\lambda_1}$ & $\boldsymbol{\lambda_2}$ & $\boldsymbol{\lambda_3}$ & $\boldsymbol{\lambda_4}$ & $\boldsymbol{\lambda_5}$ & $\boldsymbol{\lambda_6}$ & $\boldsymbol{\lambda_7}$ & $\boldsymbol{\lambda_8}$ & $\boldsymbol{\lambda_9}$ \\ \hline
\textbf{\begin{tabular}[c]{@{}c@{}}No. of \\ iterations \end{tabular}} & 2 & 7 & 5 & 2 & 7 & 6 & 5 & 6 & 4 \\ \cline{1-1}
\textbf{\begin{tabular}[c]{@{}c@{}}CPU \\ time $\boldsymbol{[s]}$\end{tabular}} & 10.12 & 47.49 & 30.74 & 13.53 & 65.01 & 48.06 & 44.93 & 58.08 & 41.15 \\ \cline{1-1}
\textbf{\begin{tabular}[c]{@{}c@{}}No. of SMW \\ linsolves \end{tabular}} & 2 & 14 & 15 & 8 & 35 & 36 & 35 & 48 & 36 \\ \cline{1-1}
\textbf{\begin{tabular}[c]{@{}c@{}}No. of \\ $\boldsymbol{G(\lambda),H(\lambda)}$ evals. \end{tabular}} & 6 & 27 & 15 & 6 & 30 & 17 & 15 & 18 & 12 \\ \cline{1-1}
\textbf{\begin{tabular}[c]{@{}c@{}}Total no. of\\ linsolves \end{tabular}} & 42 & 219 & 165 & 78 & 360 & 301 & 285 & 378 & 276 \\ 
\end{tabular}
}
\vspace{0.5cm}
\caption{Performance metrics from the simulation. 
Each solve with the SMW formula involves six full-sized linear solves.
One evaluation of $G(\lambda)$ and $H(\lambda)$ can be performed together using five linear solves.
All linear solves are sparse.}
\label{table:performance_metrics}
\end{table}
\normalsize

\section{Conclusions and outlook}\label{sec:conc_and_out}

In this work, we have illustrated how a class of NEPvs can be transformed into a NEP in a way that enables the use of NEP solvers. The transformation involves a polynomial system of equations that must be solved each time the NEP is accessed. We propose several strategies for solving this system. For the case $m=2$, we derive an explicit solution, while for the general case $m>2$, we propose a transformation leading to a MEP that can be reliably solved with MEP methods. 

The approach seems generalizable in several ways. The fact that
we use scalar terms involving squares is due to the appearance of such nonlinearities in the motivating applications, and it can be naturally adapted to other polynomials, again leading to polynomial systems.
The strategy in this paper can also be adapted for more general choices of functions, although this will lead to a nonlinear system that is not necessarily polynomial.

A more general message of the results in this paper is the existence of a transformation from NEPv to NEP. In the proposed approach, evaluating the functions $\mu_1, \ldots, \mu_m$ requires substantial computational effort. Further research on the construction of similar functions may lead to alternative transformations with lower computational cost. For example, we note that $H(\lambda)$ is the transfer function of a linear dynamical system (and $G(\lambda)$ is related to a second-order system). Therefore, the evaluation could potentially be made more efficient by precomputing a reduced-order model \cite{Antoulas:2006:MOR,benner2005dimension}.

We note that the proposed approach is, to a large extent, independent of the choice of NEP solver. While we have suggested a specific solver in this paper, other options may be more appropriate for different problem types. For instance, if the singularities of the functions $\mu_1, \ldots, \mu_m$ can be determined without substantial computational effort, this may enable the use of rational approximation methods. See Section~\ref{sec:num_alg_for_neps} for further discussion.

The theoretical construction presented in this paper is applicable to an arbitrary number of terms, i.e., $m\gg 1$. However, when $m$ is large, the polynomial system becomes computationally restrictive to solve. Although other approaches than the one proposed here may be used to handle this, we expect the polynomial system to eventually become the dominating complication.
Note that the GPE corresponds to the case $m=n$ and is therefore not solvable with the current approach, at least not without further research.
The simulations in Section~\ref{sec:num_examples} suggest several such research directions.
The $\phi$-functions in \eqref{eq:cont_problem_numexp} can be interpreted as localized weighted averages of the wave function.
This suggests to iteratively update the $\psi$-functions in a fashion similar to the SCF iteration.
More knowledge about the approximation properties, in relation to the GPE, of an approach using a small number of nonlinear terms would be insightful in applying such an iterative technique.

\section*{Declarations}
\textbf{Conflicts of interest} The authors declare no conflict of interest.

\bibliographystyle{plain}
\bibliography{nepv,gpreview}

\end{document}